\documentclass{article}
\usepackage[usenames, dvipsnames]{xcolor}
\usepackage[hyphens]{xurl}
\usepackage[colorlinks=true, breaklinks=true, linkcolor=Blue, urlcolor=Blue, citecolor=Black]{hyperref}
\newcommand{\defin}[1]{\textcolor{Maroon}{\emph{#1}}\index{#1}}
\usepackage[english]{babel}
\addto\extrasenglish{}
\addto\extrasenglish{}
\usepackage{mathtools}
\usepackage{sectsty}
\usepackage{float}
\usepackage{graphicx}
\usepackage{tocloft}
\newcommand{\sm}{\smallsetminus}
\usepackage{makeidx}

\sectionfont{\fontsize{15}{17}\selectfont}
\sectionfont{\fontsize{15}{17}\selectfont}

\usepackage{amsmath}

\usepackage[a4paper, margin=2.5cm]{geometry}
\usepackage{csquotes}
\usepackage{enumitem}
\MakeOuterQuote{"}
\usepackage{tikz}
\usetikzlibrary{arrows.meta,decorations.markings}
\usepackage{subcaption}
\usepackage[backend=biber,style=numeric, 
maxbibnames=20, 
url=false, 
isbn=false, 
doi=false]{biblatex}
\DeclareLabelalphaTemplate{
  \labelelement{
    \field[final]{shorthand}
    \field{label}
    \field[strwidth=3,strside=left,ifnames=1]{labelname}
    \field[strwidth=1,strside=left]{labelname}
  }
  \labelelement{
    \field{year}    
  }
}

\newbibmacro{string+doiurlisbn}[1]{%
  \iffieldundef{doi}{%
    \iffieldundef{url}{%
      \iffieldundef{isbn}{%
        \iffieldundef{issn}{%
          #1%
        }{%
          \href{http://books.google.com/books?vid=ISSN\thefield{issn}}{#1}%
        }%
      }{%
        \href{http://books.google.com/books?vid=ISBN\thefield{isbn}}{#1}%
      }%
    }{%
      \href{\thefield{url}}{#1}%
    }%
  }{%
    \href{http://dx.doi.org/\thefield{doi}}{#1}%
  }%
}

\DeclareFieldFormat{title}{%
  \usebibmacro{string+doiurlisbn}{\mkbibemph{#1}}}
\DeclareFieldFormat
  [article,inbook,incollection,inproceedings,patent,thesis,unpublished]
  {title}{\usebibmacro{string+doiurlisbn}{\mkbibquote{#1\isdot}}}
\DeclareFieldFormat
  [suppbook,suppcollection,suppperiodical]
  {title}{\usebibmacro{string+doiurlisbn}{#1}}

\usepackage{authblk}

\newcommand{\LLL}{{\mathcal L}_{\mathrm{out}}}
\newcommand{\SSS}{{\mathcal S}_{\mathrm{out}}}
\newcommand{\KKK}{{\mathcal K}_{\mathrm{out}}}

\usepackage{amsmath,amssymb,amsthm}
\usepackage{thmtools}
\usepackage{thm-restate}

\declaretheorem[name=Lemma, numberwithin = section]{lemma}
\declaretheorem[name=Theorem,sibling = lemma]{theorem}
\declaretheorem[name=Proposition, sibling=lemma]{proposition}

\declaretheorem[name=Definition, sibling=lemma]{definition}
\declaretheorem[name=Corollary, sibling=lemma]{corollary}

\declaretheorem[name=Notation]{notation}
\declaretheorem[name=Example]{example}
\declaretheorem[name=Remark, sibling=lemma]{remark}

\newcommand{\spec}{\mathsf{spec}}

\usepackage[noabbrev,capitalise,nameinlink]{cleveref}
\crefname{theorem}{Theorem}{Theorems}
\crefname{proposition}{Proposition}{Propositions}
\crefname{lemma}{Lemma}{Lemmas}
\crefname{claim}{Claim}{Claims}
\crefname{subclaim}{Sub-Claim}{Sub-Claims}
\crefname{observation}{Observation}{Observations}
\crefname{remark}{Remark}{Remarks}
\crefname{corollary}{Corollary}{Corollaries}
\crefname{definition}{Definition}{Definitions}
\crefname{conjecture}{Conjecture}{Conjectures}
\crefname{question}{Question}{Questions}
\crefformat{equation}{(#2#1#3)}
\Crefformat{equation}{Equation #2(#1)#3}

\usepackage{chngcntr}

\usepackage[classfont=sanserif, langfont=roman,funcfont=italic]{complexity}

\title{\LARGE{On the Spectra of Digraph Laplacians}}

\author{
S. Akbari\thanks{Department of Mathematical Sciences, Sharif University of Technology, Tehran,
Iran. Email:  s\_akbari@sharif.edu},  
M. Cavers\thanks{Department of Computer and Mathematical Sciences, University of Toronto Scarborough, Toronto, Ontario, M1C 1A4, Canada. Email: michael.cavers@utoronto.ca}, 
F. Maghsoudi\thanks{Email: farzad.maghsoudi93@gmail.com}, 
B. Miraftab\thanks{School of Computer Science,
Carleton University, Ottawa, Ontario, K1S 5B6, Canada. Email: bobby.miraftab@gmail.com}
}

\date{\today}

\begin{document}

\maketitle

\fontsize{12}{16}\selectfont

\begin{abstract}
We present several Laplacian-type matrices associated with a loopless digraph $D$: the out-/in-degree Laplacians $\mathcal L_{\mathrm{out}},\mathcal L_{\mathrm{in}}$, the incidence Laplacian $\mathcal L_{\mathrm{inc}}=BB^{\mathsf T}$, and the symmetrized and skew-symmetrized variants $\mathcal S_{\mathrm{out}},\mathcal K_{\mathrm{out}}$.
We show that $\mathcal L_{\mathrm{inc}}(D)$ coincides with the Laplacian of the underlying undirected multigraph, and we derive spectral and characteristic-polynomial relations under arc reversal and complementation (including a simplification for Eulerian digraphs for $\mathcal S_{\mathrm{out}}$).
We demonstrate that the spectral radius of $\mathcal L_{\mathrm{out}}$ is bounded above by the order of the digraph and give a characterization in the equality case.
We further obtain explicit formulas for joins and line digraphs, giving a general determinantal identity relating the out-degree Laplacian characteristic polynomials of a regular digraph and its line digraph.
\end{abstract}

\emph{Keywords:} digraphs,  eigenvalues, Laplacian matrix.

\emph{2010 MSC:} 05C20, 05C50, 15A18.



\section{Introduction}
We use standard digraph terminology, following \cite{brualdi2010spectra}. Unless stated otherwise, $D$ is a finite loopless digraph with vertex set $V(D)=\{v_1,\ldots,v_n\}$ and arc set $E$ containing $m$ arcs; multiple arcs are allowed. 
We write $A=A(D)$ for the adjacency matrix of $D$. 
Thus $A_{ij}$ is the number of arcs from $v_i$ to $v_j$, and $A_{ii}=0$ for all $i$. 
Complements (see \cref{def:comp}) and tournaments (i.e., an orientation of a complete graph) are always assumed to be simple (i.e., no loops or multiple arcs, but $2$-cycles are permitted).
The \defin{outdegree matrix} and \defin{indegree matrix} of $D$ are
$D_{\mathrm{out}}(D)=\mathrm{diag}(d_1^{+},\dots,d_n^{+})$
and $D_{\mathrm{in}}(D)=\mathrm{diag}(d_1^{-},\dots,d_n^{-})$, respectively,
where $d_i^{+}\coloneqq \sum_{j} A_{ij}$ and $d_i^{-}\coloneqq \sum_{j} A_{ji}$ denote the out-degree and in-degree of vertex $v_i$.
We also use the notation $d_{v_i}^+=d_i^+$ and $d_{v_i}^-=d_i^-$ for $v_i\in V$.
A digraph $D$ is \defin{Eulerian} if $D_{\mathrm{in}}(D)=D_{\mathrm{out}}(D)$ and \defin{$r$-out-regular} if $D_{\mathrm{out}}(D)=rI$.
The \defin{oriented incidence matrix} $B(D)$ of $D$ is the $n\times m$ matrix whose rows are indexed by vertices and whose columns are indexed by arcs. If $e_j\in E$ has tail $v_p$ and head $v_q$, then
\[
B_{ij}=
\begin{cases}
-1, & \text{if } i=p,\\
\phantom{-}1, & \text{if } i=q,\\
\phantom{-}0, & \text{otherwise.}
\end{cases}
\]
For a square matrix $M$ we write
$\phi_M(x)\coloneqq\det(xI-M)$ for its characteristic polynomial and $\spec(M)$ for its multiset of eigenvalues. 

Spectral graph theory traditionally studies undirected graphs via the adjacency matrix and the (combinatorial) Laplacian.
For digraphs, several nonequivalent Laplacian-type matrices have been proposed in the literature, e.g., see \cite{boley2011commute,brualdi1991combinatorial,MR2223513,veerman2020primer}, reflecting different ways in which
orientation interacts with degree and incidence.
Compared to the undirected setting, spectral theory for digraphs is less standardized.
Nevertheless, directed spectral methods have attracted increasing attention in recent years, see \cite{cavers2025spectra,cavers2025digraphs,cavers2026spectra}.
The purpose of this paper is to initiate a systematic study of
Laplacian spectra for digraphs, with an emphasis on the \defin{out-degree Laplacian}, denoted by $\mathcal L_{\mathrm{out}}$ and defined in \cref{def:lap}, and on the relationships among
its standard variants.
It is worth mentioning that it is known that a matrix-tree theorem holds for $\mathcal L_{\mathrm{out}}$ (see \cite[Theorem~9.6.1]{brualdi1991combinatorial}) and that $\mathcal L_{\mathrm{out}}$ is sometimes called the unnormalized Laplacian and has been used in several applications; see, for example, \cite{boley2011commute} and the references therein.

\begin{definition}\label{def:lap}
Let $D$ be a digraph and $B(D)$ be its oriented incidence matrix.
\begin{itemize}
\item The \defin{incidence Laplacian} is
$\mathcal L_{\mathrm{inc}}(D)\coloneqq B(D)B(D)^{\mathsf T}$.
\item The \defin{out-degree Laplacian} and \defin{in-degree Laplacian} are
\[
\mathcal L_{\mathrm{out}}(D)\coloneqq D_{\mathrm{out}}(D)-A(D),\qquad
\mathcal L_{\mathrm{in}}(D)\coloneqq D_{\mathrm{in}}(D)-A(D).
\]

\item The \defin{symmetrized out-degree Laplacian} and
\defin{symmetrized in-degree Laplacian} are
\[
\mathcal S_{\mathrm{out}}(D)
\coloneqq \tfrac12\bigl(\mathcal L_{\mathrm{out}}(D)+\mathcal L_{\mathrm{out}}(D)^{\mathsf T}\bigr),
\qquad
\mathcal S_{\mathrm{in}}(D)
\coloneqq \tfrac12\bigl(\mathcal L_{\mathrm{in}}(D)+\mathcal L_{\mathrm{in}}(D)^{\mathsf T}\bigr).
\]
\item The \defin{skew-symmetrized out-degree Laplacian} and \defin{skew-symmetrized in-degree Laplacian} are
\[
\mathcal K_{\mathrm{out}}(D)\coloneqq \tfrac12\bigl(\LLL(D)-\LLL(D)^{\mathsf T}\bigr),
\qquad 
\mathcal K_{\mathrm{in}}(D)\coloneqq \tfrac12\bigl(\mathcal L_{\mathrm{in}}(D)-\mathcal L_{\mathrm{in}}(D)^{\mathsf T}\bigr).
\]
\end{itemize}
\end{definition}
This paper studies the Laplacian-type matrices defined in \cref{def:lap}.  
One motivation for considering the symmetrized and skew-symmetrized out-degree Laplacians come from \cite{boley2011commute,li2012digraph}.
In \cite{li2012digraph}, the authors introduce a normalized digraph
Laplacian, denoted by $\Gamma$, and use $\nabla=(\Gamma-\Gamma^{\mathsf T})/2$ to quantify the asymmetry of a digraph,
whereas in \cite{boley2011commute}, the authors study the symmetrized Laplacians $L^s=(L+L^{\mathsf T})/2$ and $L^{ds}=(L^d+(L^d)^{\mathsf T})/2$, where $L$ and $L^d$ denote the ordinary and diagonally scaled Laplacian, respectively.
Our definition for $\mathcal K_{\mathrm{out}}$ and $\mathcal S_{\mathrm{out}}$ are in the same spirit, but are formed from the unnormalized out-degree Laplacian instead.


Unlike the undirected setting, different ``directed Laplacians'' capture genuinely different
information.  For example, $\SSS(D)$ is always symmetric and hence has real spectrum, while $\LLL(D)$ may have complex
eigenvalues; moreover, the incidence Laplacian coincides with the Laplacian of the underlying undirected multigraph
obtained from $D$ by forgetting orientations (\Cref{lem:forget}).  Thus, to detect directed phenomena one must look beyond $\mathcal L_{\mathrm{inc}}$.

This paper develops several basic structural operations and their spectral consequences.
We first analyze reversal of all arcs and record the induced transformations of
$\mathcal L_{\mathrm{out}}$, $\mathcal L_{\mathrm{in}}$, $\SSS$, $\mathcal S_{\mathrm{in}}$, $\KKK$, and $\mathcal L_{\mathrm{inc}}$
(\Cref{sec:reverse}, see~\Cref{lem:reverse-all}).
Next, we study complements of simple digraphs and derive characteristic polynomial identities for
$\mathcal L_{\mathrm{inc}}$, $\SSS$, and $\KKK$ (\Cref{sec:complement},
~\Cref{lem:comp-poly-variants}), extending classical complement relations for graphs.
We then turn to line digraphs: although the adjacency characteristic polynomial behaves cleanly under the line digraph
construction, the out-degree Laplacian does not; we give an exact determinantal relation between the characteristic polynomial of
$\mathcal L_{\mathrm{out}}$ of a digraph $D$ and that of its line digraph (\Cref{thm:outlap-line-general}) and develop corresponding reductions for $\SSS$, $\KKK$, and $\mathcal L_{\mathrm{inc}}$.
\Cref{sec:join} treats joins of digraphs, yielding explicit spectral decompositions and characteristic-polynomial
factorizations, while \Cref{sec:bipartite} discusses bipartite digraphs.

This work is intended as a foundation for a broader theory of Laplacian spectra of digraphs.
In a companion paper we will pursue extremal questions, spectral radius bounds, and applications to special classes of
digraphs.

\section{Preliminaries}
The following lemma is immediate from the definitions.

\begin{lemma}\label{lem:prim_lap}
Let $D$ be a digraph with adjacency matrix $A$. Then the following hold:
\begin{enumerate}
\item  $\LLL(D)=\SSS(D)+\mathcal K_{\mathrm{out}}(D)$, and $\mathcal L_{\mathrm{in}}(D)=\mathcal S_{\mathrm{in}}(D)+\mathcal K_{\mathrm{in}}(D)$, that is, these are the decompositions into symmetric ($\SSS(D)$ and $\mathcal S_{\mathrm{in}}(D)$) and skew-symmetric ($\KKK(D)$ and $\mathcal K_{\mathrm{in}}(D)$) parts.
\item  $\SSS(D)=D_{\mathrm{out}}(D)-\tfrac12\big(A+A^{\mathsf T}\big)$,  and $S_{\mathrm{in}}(D)=D_{\mathrm{in}}(D)-\tfrac12\big(A+A^{\mathsf T}\big)$.
\item  $\KKK(D)=\mathcal K_{\mathrm{in}}(D)=\tfrac12(A^{\mathsf T}-A)$.
\item All eigenvalues of $\SSS(D)$ and $\mathcal{S}_{\mathrm{in}}(D)$ are real, while all eigenvalues of $\KKK(D)$ and $\mathcal K_{\mathrm{in}}(D)$ are purely imaginary.
\end{enumerate}
\end{lemma}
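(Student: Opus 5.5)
The plan is to verify the four items directly from \cref{def:lap}, using only that $D_{\mathrm{out}}(D)$ and $D_{\mathrm{in}}(D)$ are diagonal, hence symmetric. No earlier result is needed. The approach is to expand each transpose explicitly and then quote the standard spectral facts for symmetric and skew-symmetric real matrices.

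For item (2), I will use $\LLL(D)^{\mathsf T}=D_{\mathrm{out}}(D)-A^{\mathsf T}$. Averaging this with $\LLL(D)=D_{\mathrm{out}}(D)-A$ gives $\SSS(D)=D_{\mathrm{out}}(D)-\tfrac12(A+A^{\mathsf T})$. The same computation with $D_{\mathrm{in}}(D)$ in place of $D_{\mathrm{out}}(D)$ gives the formula for $\mathcal S_{\mathrm{in}}(D)$.

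For item (3), taking half the difference of the same two expressions makes the diagonal term cancel. This gives $\KKK(D)=\tfrac12\bigl((D_{\mathrm{out}}-A)-(D_{\mathrm{out}}-A^{\mathsf T})\bigr)=\tfrac12(A^{\mathsf T}-A)$. Because the diagonal part drops out, the identical computation for $\mathcal L_{\mathrm{in}}$ yields the same matrix, so $\KKK(D)=\mathcal K_{\mathrm{in}}(D)$.

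For item (1), I will add the two defining expressions: $\tfrac12(M+M^{\mathsf T})+\tfrac12(M-M^{\mathsf T})=M$ for $M=\LLL(D)$, and likewise for $M=\mathcal L_{\mathrm{in}}(D)$. The first summand is symmetric and the second is skew-symmetric, since transposition fixes $M+M^{\mathsf T}$ and negates $M-M^{\mathsf T}$. The decomposition of a real square matrix into a symmetric part plus a skew-symmetric part is unique: if $S+K=S'+K'$, then $S-S'=K'-K$ is both symmetric and skew-symmetric, hence zero.

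For item (4), both matrices are real. $\SSS(D)$ and $\mathcal S_{\mathrm{in}}(D)$ are real symmetric, so their eigenvalues are real by the spectral theorem. $K=\KKK(D)$ is real skew-symmetric, so $iK$ is Hermitian. If $Kx=\lambda x$ with $x\ne0$, then
\[
\lambda\, x^*x = x^*Kx, \qquad \overline{x^*Kx} = x^*K^{\mathsf T}x = -x^*Kx .
\]
Hence $x^*Kx$ is purely imaginary, and so is $\lambda$. Here "purely imaginary" is read as including $0$, which is in fact an eigenvalue since the row sums of $\tfrac12(A^{\mathsf T}-A)$ need not vanish but $n$ odd forces it. There is no real obstacle in this proof. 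The only point needing care is this convention that $0$ counts as purely imaginary in item (4).
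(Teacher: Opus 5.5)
Your proof is correct and is essentially the paper's argument: the paper simply declares the lemma immediate from the definitions, and your direct expansion using $\LLL(D)^{\mathsf T}=D_{\mathrm{out}}(D)-A^{\mathsf T}$, together with the standard spectral facts for real symmetric and real skew-symmetric matrices, is exactly that verification. One small slip in your closing aside: $0$ is forced to be an eigenvalue of $\KKK(D)$ only when $n$ is odd (a single arc on two vertices gives eigenvalues $\pm\tfrac{i}{2}$), but this remark plays no role in the proof.
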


Since \cref{lem:prim_lap} implies $\mathcal K_{\mathrm{in}}(D)=\KKK(D)$, we do not consider $\mathcal K_{\mathrm{in}}(D)$ further in this paper and simply refer to $\KKK(D)$ as the skew-symmetrized Laplacian.

The following lemma shows that the incidence Laplacian does not capture genuinely directed information: it is precisely the Laplacian of the underlying undirected multigraph obtained by forgetting the orientations of the arcs.
For this reason, we focus on the out-degree Laplacian in this paper.
Although the lemma is a known result, we could not find any proper reference, thus provide a short proof here for the convenience of the reader.

\begin{lemma}\label{lem:forget}
Let $D$ be a loopless digraph with adjacency matrix $A$, and let $G$ be the underlying undirected
multigraph obtained by forgetting orientations (that is, each arc of $D$ becomes an undirected edge of $G$).
Then
\[
\mathcal L_{\mathrm{inc}}(D)=L(G)=D_{\mathrm{out}}(D)+D_{\mathrm{in}}(D)-(A+A^{\mathsf T}),\]
where $L(G)$ denotes the Laplacian of the multigraph $G$.
\end{lemma}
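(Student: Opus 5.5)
The plan is to compute the entries of $B B^{\mathsf T}$ directly and compare them with the entries of the right-hand side, which is by definition (or by the standard convention) the Laplacian $L(G)=\mathrm{Deg}(G)-A(G)$ of the multigraph $G$. Here $\mathrm{Deg}(G)=\mathrm{diag}(\deg_G v_i)$, and $A(G)_{ij}$ is the number of edges joining $v_i$ and $v_j$. Since $G$ has one edge for each arc of $D$, we have $\deg_G v_i=d_i^++d_i^-$. For $i\ne j$, $A(G)_{ij}=A_{ij}+A_{ji}$. Because $D$ is loopless, $A(G)_{ii}=0=A_{ii}$. Hence $L(G)=D_{\mathrm{out}}(D)+D_{\mathrm{in}}(D)-(A+A^{\mathsf T})$. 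This disposes of the second equality.

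For the first equality, write $(BB^{\mathsf T})_{ij}=\sum_{k=1}^m B_{ik}B_{jk}$ and examine the contribution of a single arc $e_k$ with tail $v_p$ and head $v_q$. Looplessness gives $p\ne q$, so column $k$ of $B$ has exactly two nonzero entries, $-1$ in row $p$ and $+1$ in row $q$. For the diagonal entries ($i=j$), $B_{ik}^2=1$ exactly when $v_i$ is the tail or the head of $e_k$. Summing over $k$ therefore gives $(BB^{\mathsf T})_{ii}=d_i^++d_i^-$. For the off-diagonal entries ($i\ne j$), $B_{ik}B_{jk}\neq 0$ only when $\{v_i,v_j\}=\{v_p,v_q\}$, and in that case the product is $(-1)(+1)=-1$. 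So $(BB^{\mathsf T})_{ij}=-(\text{number of arcs from } v_i \text{ to } v_j \text{ plus arcs from } v_j \text{ to } v_i)=-(A_{ij}+A_{ji})$.

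Comparing entries finishes the proof. Equivalently, one can write $BB^{\mathsf T}=\sum_k b_kb_k^{\mathsf T}$, where $b_k=\mathbf e_q-\mathbf e_p$. Each summand is the Laplacian of the single edge $\{v_p,v_q\}$ and does not depend on the orientation, because $(-b_k)(-b_k)^{\mathsf T}=b_kb_k^{\mathsf T}$. The Laplacian of a multigraph is additive over its edges, so $BB^{\mathsf T}=L(G)$. There is no real obstacle here. The only points needing care are the use of looplessness, which guarantees that each column has two distinct nonzero entries and that the diagonal of $A$ vanishes, and the bookkeeping for multiple arcs and $2$-cycles: both are counted correctly by $A_{ij}+A_{ji}$ in the off-diagonal entries and by $d^+_i+d^-_i$ in the degrees.
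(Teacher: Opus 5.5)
Your proposal is correct and follows essentially the same route as the paper: first identify $L(G)=D_{\mathrm{out}}(D)+D_{\mathrm{in}}(D)-(A+A^{\mathsf T})$ from $A_G=A+A^{\mathsf T}$ and $\deg_G v_i=d_i^++d_i^-$, then compute the diagonal and off-diagonal entries of $BB^{\mathsf T}$ arc by arc. Your additional rank-one view $BB^{\mathsf T}=\sum_k b_kb_k^{\mathsf T}$ is an equivalent repackaging of the same entrywise computation.
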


\begin{proof}
Let $B=B(D)$ be the oriented incidence matrix of $D$, and let $A_G$ and $D_G$ denote the adjacency matrix and diagonal degree matrix of $G$, respectively. Then $A_G=A+A^{\mathsf T}$ and $(D_G)_{ii}=d_i^+(D)+d_i^-(D)$, hence, $L(G)=D_{\mathrm{out}}(D)+D_{\mathrm{in}}(D)-(A+A^{\mathsf T})$.

We now compute the entries of $BB^{\mathsf T}$.
For $i=j$, each arc incident to $v_i$ contributes $(\pm1)^2=1$ to $(BB^{\mathsf T})_{ii}$, hence
\[
(BB^{\mathsf T})_{ii}=d_i^+(D)+d_i^-(D)=(D_G)_{ii}.
\]
For $i\neq j$, a column (arc) contributes to $(BB^{\mathsf T})_{ij}$ only if it is an arc between $v_i$ and $v_j$.
If $e=(v_i\to v_j)$ then $B_{i,e}B_{j,e}=(-1)(+1)=-1$, and if $e=(v_j\to v_i)$ then $B_{i,e}B_{j,e}=(+1)(-1)=-1$.
Therefore 
\[
(BB^{\mathsf T})_{ij}=-(A_{ij}+A_{ji})=-(A_G)_{ij}.
\]
Thus $BB^{\mathsf T}=D_G-A_G=L(G)$, as required.
\end{proof}
 
\begin{corollary}\label{cor:Eulerian}
Let $D$ be a digraph.
Then $\mathcal {L}_{\mathrm{inc}}(D)-2\,\SSS(D)=D_{\mathrm{in}}(D)-D_{\mathrm{out}}(D)$.
Furthermore, if $D$ is Eulerian, then $\mathcal{L}_{\mathrm{inc}}(D)=2\,\SSS(D)$.
\end{corollary}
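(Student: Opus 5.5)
The plan is a direct matrix computation that combines \cref{lem:forget} with part 2 of \cref{lem:prim_lap}. No spectral argument is needed, since both claims are entrywise identities between explicitly given matrices.

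First, \cref{lem:forget} gives
\[
\mathcal L_{\mathrm{inc}}(D)=D_{\mathrm{out}}(D)+D_{\mathrm{in}}(D)-(A+A^{\mathsf T}).
\]
Second, part 2 of \cref{lem:prim_lap} gives $\SSS(D)=D_{\mathrm{out}}(D)-\tfrac12(A+A^{\mathsf T})$. Doubling the latter yields
\[
2\,\SSS(D)=2D_{\mathrm{out}}(D)-(A+A^{\mathsf T}).
\]
Subtracting this from the expression for $\mathcal L_{\mathrm{inc}}(D)$, the symmetric adjacency terms $A+A^{\mathsf T}$ cancel exactly. What remains is $D_{\mathrm{out}}(D)+D_{\mathrm{in}}(D)-2D_{\mathrm{out}}(D)=D_{\mathrm{in}}(D)-D_{\mathrm{out}}(D)$, which is the first claim.

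For the second claim, if $D$ is Eulerian then $D_{\mathrm{in}}(D)=D_{\mathrm{out}}(D)$ by definition. The right-hand side of the identity is then the zero matrix, so $\mathcal L_{\mathrm{inc}}(D)=2\,\SSS(D)$.

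I do not expect any real obstacle here. The only point to check is that part 2 of \cref{lem:prim_lap} holds as stated: $\SSS$ is $\tfrac12(\LLL+\LLL^{\mathsf T})$, and $D_{\mathrm{out}}$ is symmetric, so $\SSS=D_{\mathrm{out}}-\tfrac12(A+A^{\mathsf T})$. This has already been recorded in the paper. Multiple arcs cause no issue, because both lemmas are stated for loopless digraphs with multiplicities.
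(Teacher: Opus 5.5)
Your proposal is correct and is essentially the paper's argument. The paper gives no written proof and presents the corollary as immediate from \cref{lem:forget} and part 2 of \cref{lem:prim_lap}. Your subtraction of $2\,\SSS(D)$ from $\mathcal L_{\mathrm{inc}}(D)$, with the $A+A^{\mathsf T}$ terms cancelling, is exactly that computation.
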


\section{Reverse of digraphs}\label{sec:reverse}

Reversing all arcs of a digraph is a simple operation, but it has a useful
effect on the Laplacian matrices associated with the digraph.  In particular,
arc reversal interchanges the roles of in-degrees and out-degrees, and hence
relates the out-degree Laplacian of a digraph to the in-degree Laplacian of its
reverse.  This allows results about one of these Laplacians to be translated
into corresponding results about the other.  We record these relationships for
the Laplacian variants considered in this paper.

\begin{definition}
Let $D$ be a digraph. The \defin{reverse} $\overleftarrow{D}$ of $D$ is the digraph obtained from $D$ by reversing the direction of every arc.
\end{definition}

Since the adjacency matrix satisfies $A(\overleftarrow{D})=A(D)^{\mathsf T}$, the reverse digraph is also referred to as the \emph{transpose} digraph or \emph{dual} digraph in the literature, e.g., see \cite{cavers2025spectra,ostrander2015laplacian}.
The first identities in the following lemma also appear in \cite{ostrander2015laplacian}, 
and the remainder identities follow directly from \cref{lem:prim_lap} and \cref{lem:forget}.
 
\begin{lemma}\label{lem:reverse-all}
Let $D$ be a digraph.
Then the following hold:
\begin{enumerate}
\item 
$\LLL(\overleftarrow D)=\mathcal{L}_{\mathrm{in}}(D)^{\mathsf T}
\quad\text{and}\quad
\mathcal{L}_{\mathrm{in}}(\overleftarrow D)=\LLL(D)^{\mathsf T}$.

\item $\SSS(\overleftarrow D)=\mathcal{S}_{\mathrm{in}}(D)
\quad\text{and}\quad
\mathcal{S}_{\mathrm{in}}(\overleftarrow D)=\SSS(D)$.

\item $\KKK(\overleftarrow D)=-\mathcal{K}_{\mathrm{out}}(D)$.

\item $\mathcal L_{\mathrm{inc}}(\overleftarrow D)=\mathcal L_{\mathrm{inc}}(D)$.
\end{enumerate}
\end{lemma}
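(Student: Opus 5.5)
The whole argument rests on the single identity $A(\overleftarrow D)=A(D)^{\mathsf T}$. From it I would first record how the degree matrices transform. The out-degree of $v_i$ in $\overleftarrow D$ is the $i$-th row sum of $A^{\mathsf T}$, which is the $i$-th column sum of $A$, namely $d_i^-(D)$. So
\[
D_{\mathrm{out}}(\overleftarrow D)=D_{\mathrm{in}}(D)\quad\text{and}\quad D_{\mathrm{in}}(\overleftarrow D)=D_{\mathrm{out}}(D).
\]
These three facts feed every item in turn.

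For item 1, I would substitute into \cref{def:lap}:
\[
\LLL(\overleftarrow D)=D_{\mathrm{in}}(D)-A^{\mathsf T}=\bigl(D_{\mathrm{in}}(D)-A\bigr)^{\mathsf T}=\mathcal L_{\mathrm{in}}(D)^{\mathsf T},
\]
since diagonal matrices are symmetric. The identity $\mathcal L_{\mathrm{in}}(\overleftarrow D)=\LLL(D)^{\mathsf T}$ follows in the same way.

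For item 2, I would use \cref{lem:prim_lap}(2). This gives $\SSS(\overleftarrow D)=D_{\mathrm{out}}(\overleftarrow D)-\tfrac12(A^{\mathsf T}+A)=D_{\mathrm{in}}(D)-\tfrac12(A+A^{\mathsf T})=\mathcal S_{\mathrm{in}}(D)$. The other identity in item 2 is proved symmetrically.

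For item 3, \cref{lem:prim_lap}(3) gives
\[
\KKK(\overleftarrow D)=\tfrac12\bigl((A^{\mathsf T})^{\mathsf T}-A^{\mathsf T}\bigr)=\tfrac12(A-A^{\mathsf T})=-\KKK(D).
\]

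For item 4, I would apply \cref{lem:forget}. Its formula $D_{\mathrm{out}}+D_{\mathrm{in}}-(A+A^{\mathsf T})$ is invariant when $A$ is replaced by $A^{\mathsf T}$ and $D_{\mathrm{out}}$ is swapped with $D_{\mathrm{in}}$. Equivalently, $\overleftarrow D$ and $D$ have the same underlying multigraph. A third way to see it is that $B(\overleftarrow D)=-B(D)$, so $BB^{\mathsf T}$ is unchanged.

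There is no real obstacle here. The only point needing care is the degree swap, which is a simple row-sum versus column-sum check.
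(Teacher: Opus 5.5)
Your proof is correct and takes essentially the paper's route. The paper cites item 1 from the literature and says the remaining identities follow directly from \cref{lem:prim_lap} and \cref{lem:forget}; that is precisely your substitution of $A(\overleftarrow D)=A(D)^{\mathsf T}$ and $D_{\mathrm{out}}(\overleftarrow D)=D_{\mathrm{in}}(D)$, and you simply also verify item 1 directly instead of citing it.
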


With \cref{lem:reverse-all} in mind, we henceforth focus on the out-degree Laplacian variants for the remainder of this paper, that is, $\LLL$, $\SSS$, and $\KKK$, along with the incidence Laplacian $\mathcal L_{\mathrm{inc}}$.

In general, for the reverse digraph, the spectrum of $\LLL(\overleftarrow D)$ and $\LLL(D)$ differ as does the spectrum of $\SSS(\overleftarrow D)$ and $\SSS(D)$. 
However, since $\KKK(D)$ is real skew-symmetric, its spectrum is purely imaginary and symmetric with respect to $0$ (hence invariant under multiplication by $-1$) giving the following corollary.

\begin{corollary}
Let $D$ be a digraph. Then 
\[
\spec(\KKK(\overleftarrow D))=\spec(\KKK(D))
\quad\text{and}\quad
\spec(\mathcal L_{\mathrm{inc}}(\overleftarrow D))=\spec(\mathcal L_{\mathrm{inc}}(D)).
\]
\end{corollary}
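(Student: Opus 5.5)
The corollary follows directly from \cref{lem:reverse-all}. I would handle the two identities separately; both are short.

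For the skew-symmetrized Laplacian, part~3 of \cref{lem:reverse-all} gives $\KKK(\overleftarrow D)=-\KKK(D)$. Hence
\[
\phi_{\KKK(\overleftarrow D)}(x)=\det\bigl(xI+\KKK(D)\bigr)=(-1)^n\det\bigl(-xI-\KKK(D)\bigr)=(-1)^n\phi_{\KKK(D)}(-x),
\]
so $\spec(\KKK(\overleftarrow D))=-\spec(\KKK(D))$ as multisets.

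It then remains to show that the spectrum of the real skew-symmetric matrix $K=\KKK(D)$ is invariant under negation, with multiplicities. I would argue this in either of two ways. One option: $K$ is real, so its non-real eigenvalues occur in conjugate pairs with equal multiplicities, and by part~4 of \cref{lem:prim_lap} every eigenvalue is purely imaginary, so $\overline{\lambda}=-\lambda$. The other option is purely algebraic: $\phi_K(-x)=\det(-xI-K)=\det\bigl((-xI-K)^{\mathsf T}\bigr)=\det(-xI+K)=(-1)^n\phi_K(x)$, which gives the symmetry of the spectrum directly. Combining this with the previous display yields $\phi_{\KKK(\overleftarrow D)}=\phi_{\KKK(D)}$, and therefore equal spectra.

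For the incidence Laplacian, part~4 of \cref{lem:reverse-all} states that $\mathcal L_{\mathrm{inc}}(\overleftarrow D)=\mathcal L_{\mathrm{inc}}(D)$ as matrices, so their spectra coincide trivially. This also agrees with \cref{lem:forget}: $D$ and $\overleftarrow D$ have the same underlying multigraph.

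There is no real obstacle here. The only point needing care is that multiplicities are preserved under $\lambda\mapsto-\lambda$, and the characteristic-polynomial identity above handles that cleanly.
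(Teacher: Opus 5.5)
Your proposal is correct and is essentially the paper's argument. The paper likewise uses $\KKK(\overleftarrow D)=-\KKK(D)$ together with the fact that a real skew-symmetric matrix has spectrum symmetric about $0$, and takes the incidence-Laplacian identity directly from part~4 of \cref{lem:reverse-all}. Your characteristic-polynomial computation is a clean way to justify the multiplicity-preserving symmetry, which the paper only asserts.
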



\begin{corollary}
Let $D$ be an Eulerian digraph. Then $\SSS(\overleftarrow D)=\SSS(D)$, and hence,
$\spec(\SSS(\overleftarrow D))=\spec(\SSS(D))$.
\end{corollary}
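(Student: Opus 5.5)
The plan is to obtain the corollary directly from \cref{lem:reverse-all} together with the defining property of Eulerian digraphs. By \cref{lem:reverse-all}(2), $\SSS(\overleftarrow D)=\mathcal S_{\mathrm{in}}(D)$. So it suffices to show that $\mathcal S_{\mathrm{in}}(D)=\SSS(D)$ whenever $D$ is Eulerian.

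For this I will use \cref{lem:prim_lap}(2), which gives
\[
\SSS(D)=D_{\mathrm{out}}(D)-\tfrac12\bigl(A+A^{\mathsf T}\bigr)
\quad\text{and}\quad
\mathcal S_{\mathrm{in}}(D)=D_{\mathrm{in}}(D)-\tfrac12\bigl(A+A^{\mathsf T}\bigr).
\]
Subtracting, $\mathcal S_{\mathrm{in}}(D)-\SSS(D)=D_{\mathrm{in}}(D)-D_{\mathrm{out}}(D)$. This difference vanishes exactly when $D$ is Eulerian, by definition. Hence $\SSS(\overleftarrow D)=\mathcal S_{\mathrm{in}}(D)=\SSS(D)$.

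An alternative route avoiding $\mathcal S_{\mathrm{in}}$ is to compute directly. Since $A(\overleftarrow D)=A^{\mathsf T}$ and $D_{\mathrm{out}}(\overleftarrow D)=D_{\mathrm{in}}(D)$, we get $\SSS(\overleftarrow D)=D_{\mathrm{in}}(D)-\tfrac12(A^{\mathsf T}+A)$, and this equals $\SSS(D)$ under the Eulerian hypothesis.

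The spectral statement then follows trivially, since equal matrices have equal spectra. There is no real obstacle. The only point needing care is checking that reversal swaps in- and out-degrees, i.e.\ $D_{\mathrm{out}}(\overleftarrow D)=D_{\mathrm{in}}(D)$, which is already encoded in \cref{lem:reverse-all}.
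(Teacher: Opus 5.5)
Your proof is correct, but it takes a slightly different route from the paper's. The paper never uses $\mathcal S_{\mathrm{in}}$. It applies \cref{cor:Eulerian} to write $\SSS(D)=\tfrac12\mathcal L_{\mathrm{inc}}(D)$, then uses the reversal invariance $\mathcal L_{\mathrm{inc}}(\overleftarrow D)=\mathcal L_{\mathrm{inc}}(D)$ from \cref{lem:reverse-all}(4), and then applies \cref{cor:Eulerian} again to $\overleftarrow D$. That last step tacitly uses that $\overleftarrow D$ is Eulerian, which holds because reversal swaps $D_{\mathrm{in}}$ and $D_{\mathrm{out}}$.

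You instead use \cref{lem:reverse-all}(2) and compare the two symmetrized Laplacians directly. This gives the identity $\SSS(\overleftarrow D)-\SSS(D)=D_{\mathrm{in}}(D)-D_{\mathrm{out}}(D)$, valid for every digraph. Your route is slightly more self-contained, since there is no separate check that the reverse is Eulerian. It is also more informative: it shows that the matrix equality $\SSS(\overleftarrow D)=\SSS(D)$ holds if and only if $D$ is Eulerian.

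The paper's route makes a different conceptual point. For Eulerian digraphs, $\SSS$ is half the Laplacian of the underlying undirected multigraph, so it carries no orientation information, and invariance under reversal follows automatically.
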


\begin{proof}
By \Cref{cor:Eulerian}, we have $\SSS(D)=\frac{1}{2}\mathcal{L}_{\mathrm{inc}}(D)=\frac{1}{2}\mathcal{L}_{\mathrm{inc}}(\overleftarrow D )=\SSS(\overleftarrow D)$, from which the result follows.
\end{proof}

\subsection{Multiplicity of the zero eigenvalue of $\LLL$}
Note that $0$ is always an eigenvalue of the out-degree and in-degree Laplacians $\LLL$ and $\mathcal{L}_{\mathrm{in}}$.
Its multiplicity depends on the concept of a reach described in \cite{MR2223513}.
Recall that a digraph $D$ is said to be \defin{strongly connected} if, 
for every pair of vertices $u,v \in V(D)$, there exists a directed path from $u$ to $v$.
A \defin{strongly connected component} (or \defin{SCC}) of a directed graph $D$ 
is a maximal strongly connected subdigraph of $D$.
Following \cite{MR2223513,veerman2020primer}, we use the following reach terminology.

\begin{definition}\label{def:reach}
Let $D$ be a digraph with vertex set $V$, and let $\mathcal C(D)$ denote its condensation digraph. A strongly connected component $C$ of $D$ is called a \defin{source SCC} if it has in-degree $0$ in $\mathcal C(D)$, and is called a \defin{sink SCC} if it has out-degree $0$ in $\mathcal C(D)$.
For a source SCC $C$, its corresponding \defin{reach} is the vertex set
\[
R(C)\coloneqq \{\,v\in V:\text{ there exists a directed path from some vertex of }C\text{ to }v\,\}.
\]
Equivalently, the reaches of $D$ are the maximal sets of the form
\[
R(i)\coloneqq \{\,j\in V:\text{ there exists a directed path }i\to j\,\}
\]
for some vertex $i$.
For a sink SCC $C$, its corresponding \defin{co-reach} is the vertex set
\[
R^{-}(C)\coloneqq \{\,v\in V:\text{ there exists a directed path from }v\text{ to some vertex of }C\,\}.
\]
Equivalently, the co-reaches of $D$ are the reaches of the reverse digraph $\overleftarrow D$.
\end{definition}

In \cite{MR2223513}, the multiplicity of $0$ is determined for $\mathcal L_{\mathrm{in}}(D)$, which in turn yields a corresponding result for the multiplicity of $0$ for $\mathcal L_{\mathrm{out}}(D)$.
  
\begin{theorem}{\rm \cite[Thm.~3.2 \& Cor.~4.2]{MR2223513}}\label{thm:zero-mult-reaches}
Let $D$ be a digraph. Then the algebraic and geometric multiplicity of the
eigenvalue $0$ of $\mathcal L_{\mathrm{in}}(D)$ equals the number of reaches of
$D$.
\end{theorem}

\begin{corollary}\label{cor:zero-mult-coreaches}
Let $D$ be a digraph. Then the algebraic and geometric multiplicity of the
eigenvalue $0$ of $\LLL(D)$ equals the number of co-reaches of $D$, equivalently
the number of sink SCCs of $D$.
\end{corollary}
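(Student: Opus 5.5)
We deduce the statement from \cref{thm:zero-mult-reaches} applied to the reverse digraph $\overleftarrow D$, using \cref{lem:reverse-all}. By part~1 of that lemma, applied with $\overleftarrow D$ in place of $D$ and using $\overleftarrow{\overleftarrow D}=D$, we get
\[
\mathcal L_{\mathrm{in}}(\overleftarrow D)=\LLL(D)^{\mathsf T}.
\]
A matrix and its transpose have the same characteristic polynomial. Hence $0$ has the same algebraic multiplicity for $\LLL(D)$ and for $\mathcal L_{\mathrm{in}}(\overleftarrow D)$. Moreover, $\operatorname{rank} M=\operatorname{rank} M^{\mathsf T}$, so $\dim\ker M=\dim\ker M^{\mathsf T}$ for every square $M$. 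Therefore the geometric multiplicities of $0$ also agree.

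Next, \cref{thm:zero-mult-reaches} applied to $\overleftarrow D$ says that both multiplicities of $0$ for $\mathcal L_{\mathrm{in}}(\overleftarrow D)$ equal the number of reaches of $\overleftarrow D$. By \cref{def:reach}, these are exactly the co-reaches of $D$. This proves the first assertion.

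For the equivalence with sink SCCs, we argue as follows. Reversal maps SCCs of $D$ to SCCs of $\overleftarrow D$, with the same vertex sets, and it reverses the condensation digraph. Hence the sink SCCs of $D$ are precisely the source SCCs of $\overleftarrow D$. By definition, co-reaches are indexed by sink SCCs via $C\mapsto R^-(C)$, so it only remains to check that this map is injective.

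Suppose $C\neq C'$ are distinct sink SCCs. A vertex of $C'$ cannot reach $C$, since $C'$ has no outgoing arcs in the condensation. So $C'\not\subseteq R^-(C)$, while $C'\subseteq R^-(C')$, and thus $R^-(C)\neq R^-(C')$. Consequently the number of co-reaches equals the number of sink SCCs.

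The only subtle point is this counting step, namely that distinct sink SCCs give distinct co-reaches. The argument above handles it directly. Everything else is a direct transfer via transposition, which preserves both algebraic and geometric multiplicities.
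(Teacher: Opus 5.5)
Your proposal is correct and follows the paper's own proof: it uses \cref{lem:reverse-all} to write $\mathcal L_{\mathrm{in}}(\overleftarrow D)=\LLL(D)^{\mathsf T}$, notes that transposition preserves both multiplicities of $0$, and then applies \cref{thm:zero-mult-reaches} to $\overleftarrow D$. Your explicit argument that distinct sink SCCs give distinct co-reaches fills in the final equivalence, which the paper only asserts.
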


\begin{proof}
By \Cref{lem:reverse-all}, $\LLL(D)^{\mathsf T}=\mathcal L_{\mathrm{in}}(\overleftarrow D)$.
Thus $\LLL(D)$ and $\mathcal L_{\mathrm{in}}(\overleftarrow D)$ have the same
algebraic and geometric multiplicities at $0$. Applying
\Cref{thm:zero-mult-reaches} to $\overleftarrow D$, the multiplicity equals the
number of reaches of $\overleftarrow D$, which is the number of co-reaches of
$D$, equivalently the number of sink SCCs of $D$.
\end{proof}

\Cref{cor:zero-mult-coreaches} implies that if $D$ is a tournament, then $0$ is an eigenvalue of $\mathcal L_{\mathrm{out}}(D)$ with multiplicity one since every tournament has at most one sink vertex and thus exactly one sink SCC.

\section{Complement of digraphs}\label{sec:complement}
For a graph $G$ of order $n$, let $0=\mu_1(G)\leq\mu_2(G)\leq\cdots\leq\mu_n(G)$
denote the eigenvalues of the Laplacian $L(G)$ of $G$.
Since $L(G)+L(\overline{G})=nI-J$, where $\overline{G}$ is the complement of $G$ and $J$ represents the $n\times n$ all ones matrix, it follows that except for the trivial eigenvalue $0$, the Laplacian eigenvalues satisfy
\begin{equation}\label{eq:comp}
\mu_i(G)+\mu_{n+2-i}(\overline{G})=n,    
\end{equation}
for $i=2,\dotsc,n$. 
This classical relation for the complement also holds for $\LLL$ under a suitable ordering of the spectrum.
Recall that a simple digraph has no loops or multiple arcs, but $2$-cycles are permitted.

\begin{definition}\label{def:comp}
Let $D$ be a simple digraph.
The \defin{complement} $\overline{D}$ of $D$ is the digraph with the same vertex set, and with arc $(x,y)\in E(\overline{D})$ if and only if $(x,y)\not\in E(D)$, for all distinct $x,y\in V(D)$.
\end{definition}

Since the adjacency matrix for a complement satisfies
$A(\overline{D})=J-I-A(D)$, the following lemma is immediate.

\begin{lemma}\label{lem:comp-all}
Let $D$ be a digraph on $n$ vertices.
Then the following hold:
\begin{enumerate}
\item $\mathcal{L}_{\mathrm{inc}}(\overline{D})+\mathcal{L}_{\mathrm{inc}}(D)= 2(nI - J)$.
\item $\mathcal{L}_{\mathrm{out}}(\overline{D})+\mathcal{L}_{\mathrm{out}}(D)=nI-J$.
\item $\mathcal{S}_{\mathrm{out}}(\overline{D})+\mathcal{S}_{\mathrm{out}}(D)=nI-J$.
\item $\mathcal{K}_{\mathrm{out}}(\overline{D})=-\mathcal{K}_{\mathrm{out}}(D)$.
\end{enumerate}
\end{lemma}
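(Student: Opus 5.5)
The plan is to compute each of the four identities directly from the matrix formulas, starting from the single relation $A(\overline{D})=J-I-A(D)$. This relation holds because $D$ is simple and loopless, so for $x\neq y$ exactly one of $A_{xy}(D)$, $A_{xy}(\overline{D})$ equals $1$, and both diagonals are $0$. The first step is to derive the degree matrices of $\overline{D}$. Taking row sums of $A(\overline{D})$ gives $\overline{d}_i^{+}=n-1-d_i^{+}$, so $D_{\mathrm{out}}(\overline{D})=(n-1)I-D_{\mathrm{out}}(D)$. Taking column sums gives the analogous identity $D_{\mathrm{in}}(\overline{D})=(n-1)I-D_{\mathrm{in}}(D)$.

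Next I would add the matrices for $D$ and $\overline{D}$ item by item.

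For item 2, summing gives
\[
\LLL(\overline{D})+\LLL(D)=(n-1)I-(J-I)=nI-J.
\]

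For item 3, I use $\SSS(M)=\tfrac12(\LLL(M)+\LLL(M)^{\mathsf T})$ for $M\in\{D,\overline{D}\}$. By linearity and the symmetry of $nI-J$, item 2 then gives $\SSS(\overline{D})+\SSS(D)=\tfrac12\bigl((nI-J)+(nI-J)^{\mathsf T}\bigr)=nI-J$.

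For item 1, I invoke \cref{lem:forget}, which gives $\mathcal L_{\mathrm{inc}}=D_{\mathrm{out}}+D_{\mathrm{in}}-(A+A^{\mathsf T})$. Adding the expressions for $D$ and $\overline{D}$ yields
\[
2(n-1)I-\bigl((J-I)+(J-I)^{\mathsf T}\bigr)=2(n-1)I-2(J-I)=2(nI-J).
\]

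For item 4, I use \cref{lem:prim_lap}(3), which gives $\KKK=\tfrac12(A^{\mathsf T}-A)$. Substituting the complement relation,
\[
A(\overline{D})^{\mathsf T}-A(\overline{D})=(J-I-A^{\mathsf T})-(J-I-A)=-(A^{\mathsf T}-A),
\]
because $J-I$ is symmetric. Hence $\KKK(\overline{D})=-\KKK(D)$.

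There is no real obstacle here. The only point needing care is to make explicit that the statement requires $D$ to be simple, as in \cref{def:comp}. Simplicity is what makes $A(\overline{D})=J-I-A(D)$ valid; with multiple arcs this formula fails, and so would the degree identities derived from it.
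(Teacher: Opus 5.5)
Your proposal is correct and is the paper's proof: substitute $A(\overline D)=J-I-A(D)$ and the complementary degree relations into the formulas of \cref{lem:prim_lap} and \cref{lem:forget}, and simplify. Three small points go beyond what the paper writes out, and all are improvements rather than departures. You derive $D_{\mathrm{in}}(\overline D)=(n-1)I-D_{\mathrm{in}}(D)$, which item 1 needs but the paper leaves implicit. You obtain item 3 from item 2 by symmetrization, an equivalent shortcut. You also state explicitly that $D$ must be simple for the complement to be defined.
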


\begin{proof}
Since $A(\overline D)=J-I-A(D)$ and $D_{\mathrm{out}}(\overline D)=(n-1)I-D_{\mathrm{out}}(D)$, all identities follow by substituting into the formulas in \cref{lem:prim_lap} and \cref{lem:forget} and simplifying.
\end{proof}

The coronal of an $n\times n$ matrix $M$, defined by $\chi_M(x) = \textbf{1}_n^{\mathsf T}\big(x I_n-M\big)^{-1}\textbf{1}_n$ where $\textbf{1}_n$ denotes the all ones vector, often appears in formulas for the characteristic polynomial of complements of graphs and digraphs.
For example, \cite[Theorem~12]{Mcleman} shows
\[
\phi_{A(\overline{G})}(x)=(-1)^n\,\phi_{A(G)}(-x-1)\,\left(1+\chi_{A(G)}(-x-1)\right)
\]
for a graph $G$, and \cite[Corollary~2.4]{cavers2025spectra} shows 
\[
\phi_{A(\overline{D})}(x) = (-1)^n\, \phi_{A(D)}(-x - 1)\,\big(1 + \chi_{A(D)}(-x - 1)\big)
\]
for a digraph $D$ and this also holds if $D$ has multiple arcs. 
More generally, \cite[Theorem~2.2(ii)]{cavers2025spectra} shows that if $M$ is an $n\times n$ matrix and $M'=aM+bJ+cI$, where $a,b,c\in\mathbb{R}$ and $a\neq 0$, then 
\[
\phi_{M'}(x)= a^{n-1}\,\phi_M\left(\tfrac{x-c}{a}\right)\left(a-b\,\chi_M\left(\tfrac{x-c}{a}\right)\right).
\]
We also note that if $M$ is an $n\times n$ matrix with constant row sum $t$, then $\chi_M (x) = n/(x-t)$ (see \cite[Proposition 2]{cui2012spectrum}).
These give a relationship between the characteristic polynomial of a digraph $D$ and of its complement for the Laplacian matrices studied in this paper.

\begin{theorem}\label{lem:comp-poly-variants}
Let $D$ be a simple digraph on $n$ vertices. Then
\begin{enumerate}
\item {\rm \cite{LiSu}}\label{lem:comp}
$\phi_{\LLL(\overline D)}(x)=(-1)^n\,\frac{x}{x-n}\,\phi_{\LLL(D)}(n-x)$.
\item $\phi_{\mathcal L_{\mathrm{inc}}(\overline D)}(x)=(-1)^n\,\frac{x}{x-2n}\,\phi_{\mathcal L_{\mathrm{inc}}(D)}(2n-x)$. 
\item $\phi_{\SSS(\overline D)}(x)=(-1)^n\,\phi_{\SSS(D)}(n-x)\,\big(1-\chi_{\SSS(D)}(n-x)\big)$.
\item $\phi_{\KKK(\overline D)}(x)=\phi_{\KKK(D)}(x)$.
\end{enumerate}
\end{theorem}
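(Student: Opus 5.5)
All four parts follow one template: write the Laplacian of $\overline D$ as an affine combination $aM+bJ+cI$ of the corresponding matrix $M$ of $D$, using \cref{lem:comp-all}. Then apply \cite[Theorem~2.2(ii)]{cavers2025spectra},
\[
\phi_{aM+bJ+cI}(x)=a^{n-1}\phi_M\!\left(\tfrac{x-c}{a}\right)\Bigl(a-b\,\chi_M\!\left(\tfrac{x-c}{a}\right)\Bigr),
\]
and evaluate the coronal wherever $M$ has constant row sums.

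\textbf{Parts 1 and 2.} For part 1, \cref{lem:comp-all} gives $\LLL(\overline D)=-\LLL(D)-J+nI$, so we take $a=-1$, $b=-1$, $c=n$.
\begin{itemize}
\item The argument becomes $(x-n)/(-1)=n-x$ and the prefactor is $(-1)^{n-1}$.
\item The last factor is $-1+\chi_{\LLL(D)}(n-x)$.
\item $\LLL(D)$ has all row sums $0$, so \cite[Proposition~2]{cui2012spectrum} gives $\chi_{\LLL(D)}(y)=n/y$.
\item Hence $-1+n/(n-x)=x/(n-x)$.
\end{itemize}
Multiplying by $(-1)^{n-1}$ yields $(-1)^n\frac{x}{x-n}\phi_{\LLL(D)}(n-x)$, as claimed.

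Part 2 is identical with $a=-1$, $b=-2$, $c=2n$. Again $\mathcal L_{\mathrm{inc}}(D)$ has zero row sums, by \cref{lem:forget}. The last factor is $-1+2n/(2n-x)=x/(2n-x)$, which gives the stated formula.

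\textbf{Part 3.} Here $\SSS(\overline D)=-\SSS(D)-J+nI$, so the same substitution gives $(-1)^{n-1}\phi_{\SSS(D)}(n-x)\bigl(-1+\chi_{\SSS(D)}(n-x)\bigr)$. This equals $(-1)^n\phi_{\SSS(D)}(n-x)\bigl(1-\chi_{\SSS(D)}(n-x)\bigr)$. No simplification is available, because $\SSS(D)$ need not have constant row sums; its row sums are $\tfrac12(d_i^+-d_i^-)$. This is why the coronal remains in the statement.

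\textbf{Part 4.} \cref{lem:comp-all} gives $\KKK(\overline D)=-\KKK(D)$, so
\[
\phi_{\KKK(\overline D)}(x)=\det(xI+K)=(-1)^n\phi_K(-x), \qquad K=\KKK(D).
\]
Since $K^{\mathsf T}=-K$, we also have $\det(xI+K)=\det\bigl((xI+K)^{\mathsf T}\bigr)=\det(xI-K)=\phi_K(x)$.

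\textbf{Main obstacle.} The only point needing care is that the formulas are identities of rational functions. The factor $\frac{x}{x-n}$ is a rational expression, and the coronal has poles at eigenvalues of $M$, so we verify the identity for generic $x$ (where $\frac{x-c}{a}\notin\spec(M)$) and extend by continuity or polynomial identity. In parts 1 and 2 the pole at $x=n$ (respectively $x=2n$) cancels, because $\phi_{\LLL(D)}(n-x)$ has a factor $(n-x)$: $0$ is an eigenvalue of $\LLL(D)$, so $y$ divides $\phi_{\LLL(D)}(y)$.
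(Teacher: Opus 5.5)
Your proposal is correct and follows essentially the same route as the paper: use \cref{lem:comp-all} to write each Laplacian of $\overline D$ as $aM+bJ+cI$, apply \cite[Theorem~2.2(ii)]{cavers2025spectra}, and use $\chi_M(y)=n/y$ for the zero-row-sum matrices $\LLL(D)$ and $\mathcal L_{\mathrm{inc}}(D)$. The only differences are cosmetic: you derive part 1 with the same template instead of citing \cite{LiSu}, and in part 4 you use $\det(xI+K)=\det\bigl((xI+K)^{\mathsf T}\bigr)=\det(xI-K)$ rather than appealing to the symmetry of the spectrum of a real skew-symmetric matrix.
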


\begin{proof}
We have
$\mathcal{S}_{\mathrm{out}}(\overline D)=-\mathcal{S}_{\mathrm{out}}(D)+nI-J$ and $\mathcal{L}_{\mathrm{inc}}(\overline D)=-\mathcal{L}_{\mathrm{inc}}(D)+2nI-2J$ by Lemma~\ref{lem:comp-all}.
Hence the formulas for $\phi_{\SSS(\overline D)}(x)$ and
$\phi_{\mathcal L_{\mathrm{inc}}(\overline D)}(x)$ follow directly from
\cite[Theorem~2.2]{cavers2025spectra} together with $\chi_{\mathcal{L}_{\mathrm{inc}}(D)}(x)=n/x$ since $\mathcal{L}_{\mathrm{inc}}(D)$ has constant row sum zero.
Finally, since $\KKK(\overline D)=-\KKK(D)$ by \Cref{lem:prim_lap}, we have
\[
\phi_{\KKK(\overline D)}(x)=\det(xI+\KKK(D))
=(-1)^n\phi_{\KKK(D)}(-x)
=\phi_{\KKK(D)}(x).
\]
The final equality follows since $\KKK(D)$ is real skew-symmetric and its spectrum is symmetric about $0$.
\end{proof}
 
If $D$ is Eulerian, then $\mathcal{S}_{\mathrm{out}}(D)$ has constant row sum and thus $\chi_{\mathcal{S}_{\mathrm{out}}(D)}(x)=n/x$ giving the following simplification for $\phi_{\SSS(\overline D)}(x)$ for Eulerian digraphs.

\begin{corollary}\label{cor:eulSSS}
If $D$ is a simple Eulerian digraph, then
\[
\phi_{\SSS(\overline D)}(x)=(-1)^n\,\frac{x}{x-n}\,\phi_{\SSS(D)}(n-x).
\]
\end{corollary}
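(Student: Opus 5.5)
The plan is to specialise item~3 of \cref{lem:comp-poly-variants} by computing the coronal $\chi_{\SSS(D)}$ explicitly when $D$ is Eulerian.

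The first step is to check that $\SSS(D)$ has constant row sum $0$. By \cref{lem:prim_lap}, $\SSS(D)=D_{\mathrm{out}}(D)-\tfrac12(A+A^{\mathsf T})$. Hence
\[
\SSS(D)\mathbf 1_n=\bigl(d_i^+-\tfrac12(d_i^++d_i^-)\bigr)_i=\tfrac12\bigl(d_i^+-d_i^-\bigr)_i,
\]
which vanishes exactly because $D$ is Eulerian. Equivalently, \cref{cor:Eulerian} gives $\SSS(D)=\tfrac12\mathcal L_{\mathrm{inc}}(D)$, and $\mathcal L_{\mathrm{inc}}(D)$ is a multigraph Laplacian with zero row sums.

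Next I apply the cited fact that a matrix with constant row sum $t$ has coronal $n/(x-t)$. With $t=0$ this gives $\chi_{\SSS(D)}(y)=n/y$. Directly, $(yI-\SSS(D))\mathbf 1_n=y\mathbf 1_n$, so $(yI-\SSS(D))^{-1}\mathbf 1_n=y^{-1}\mathbf 1_n$ and $\mathbf 1_n^{\mathsf T}(yI-\SSS(D))^{-1}\mathbf 1_n=n/y$. This holds for $y$ outside the spectrum and $y\neq 0$.

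Now substitute $y=n-x$ into item~3 of \cref{lem:comp-poly-variants}:
\[
1-\chi_{\SSS(D)}(n-x)=1-\frac{n}{n-x}=\frac{-x}{n-x}=\frac{x}{x-n}.
\]
This yields $\phi_{\SSS(\overline D)}(x)=(-1)^n\frac{x}{x-n}\phi_{\SSS(D)}(n-x)$ as rational functions. The identity is valid for all $x$ outside a finite set, so it extends to an identity of rational functions. The right-hand side is genuinely a polynomial, since $0$ is an eigenvalue of $\SSS(D)$ with eigenvector $\mathbf 1_n$, so $(n-x)$ divides $\phi_{\SSS(D)}(n-x)$.

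The only mild obstacle is this bookkeeping about where the coronal is defined. I would handle it by noting that both sides are rational functions agreeing on a cofinite set, hence equal.
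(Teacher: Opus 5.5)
Your proposal is correct and matches the paper's argument: the paper also notes that for Eulerian $D$ the matrix $\SSS(D)$ has constant row sum $0$, so $\chi_{\SSS(D)}(y)=n/y$, and substitutes this into item~3 of \cref{lem:comp-poly-variants}. Your remarks on extending the identity from a cofinite set, and on $(n-x)$ dividing $\phi_{\SSS(D)}(n-x)$, add rigor that the paper leaves implicit.
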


\Cref{lem:comp-poly-variants} and \Cref{cor:eulSSS} show that an analogue to the classical relation \cref{eq:comp} for graph Laplacians also holds for digraph Laplacians.
For multisets $A$ and $B$, $A\sqcup B$ denotes the disjoint union of $A$ and $B$.

\begin{corollary}\label{cor:compeig}
Let $D$ be a simple digraph on $n$ vertices and $\sm_1\{0\}$ denote deleting exactly one copy of $0$ from the multiset.
Then the following hold:
\begin{enumerate}
\item 
$\spec\!\big({\mathcal L}_{\mathrm{out}}(\overline D)\big)
=\{0\}
\;\sqcup\;
\left(n-\big(\spec({\mathcal L}_{\mathrm{out}}(D))\sm_1\{0\}\big)\right)$.
\item 
$\spec\!\big({\mathcal L}_{\mathrm{inc}}(\overline D)\big)
=\{0\}
\;\sqcup\;
\left(2n-\big(\spec({\mathcal L}_{\mathrm{inc}}(D))\sm_1\{0\}\big)\right)$.
\item If $D$ is Eulerian, then\\
$\spec\!\big({\mathcal S}_{\mathrm{out}}(\overline D)\big)
=\{0\}
\;\sqcup\;
\left(n-\big(\spec({\mathcal S}_{\mathrm{out}}(D))\sm_1\{0\}\big)\right)$.
    \item $\spec(\KKK(\overline D))=\spec(\KKK(D))$. 
\end{enumerate}
\end{corollary}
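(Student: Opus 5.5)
The plan is to read off each item from the characteristic-polynomial identities in \Cref{lem:comp-poly-variants} and \Cref{cor:eulSSS}, by comparing roots with multiplicity. For item 1, write
\[
\phi_{\LLL(D)}(y)=\prod_{i=1}^n (y-\lambda_i),
\]
where $\lambda_1=0$. This is possible because $0$ is always an eigenvalue of $\LLL(D)$: each row sums to zero, so $\LLL(D)\mathbf 1=0$. Substituting $y=n-x$ gives
\[
\phi_{\LLL(D)}(n-x)=(-1)^n (x-n)\prod_{i=2}^n (x-(n-\lambda_i)).
\]
The identity in \Cref{lem:comp-poly-variants} then becomes
\[
\phi_{\LLL(\overline D)}(x)=x\prod_{i=2}^n\bigl(x-(n-\lambda_i)\bigr).
\]
This exhibits the eigenvalue multiset of $\LLL(\overline D)$ as $\{0\}\sqcup\{n-\lambda_i : i\ge 2\}$, which is the claimed spectrum. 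The point is that the rational factor $\frac{x}{x-n}$ cancels exactly one linear factor $x-n$, namely the one coming from the eigenvalue $0$ of $\LLL(D)$. This holds as a polynomial identity, so it does not matter whether other $\lambda_i$ also equal $0$.

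Item 2 is the same computation with $n$ replaced by $2n$. It uses the second identity of \Cref{lem:comp-poly-variants} and the fact that $\mathcal L_{\mathrm{inc}}(D)\mathbf 1=0$, which holds by \Cref{lem:forget} since $\mathcal L_{\mathrm{inc}}(D)$ is a multigraph Laplacian. Item 3 is again identical, now using \Cref{cor:eulSSS}. Here $0\in\spec(\SSS(D))$ because for Eulerian $D$ the row sums of $\SSS(D)=D_{\mathrm{out}}-\tfrac12(A+A^{\mathsf T})$ are $d_i^+-\tfrac12(d_i^++d_i^-)=0$. Item 4 follows immediately from item 4 of \Cref{lem:comp-poly-variants}, since equal characteristic polynomials give equal spectra as multisets.

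The only step needing care is the cancellation of the factor $\frac{x}{x-n}$. The identities of \Cref{lem:comp-poly-variants} are polynomial identities whose right-hand side is a priori rational. To justify the cancellation, I will first factor out the root $0$ of $\phi_{\LLL(D)}$ explicitly, and only then substitute $y=n-x$. Done in that order, the rational factor disappears cleanly and the multiset bookkeeping, in particular the removal of exactly one copy of $0$ via $\sm_1\{0\}$, is transparent.
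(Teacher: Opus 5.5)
Your proposal is correct and follows the paper's route: the paper states this corollary without a separate proof, as an immediate consequence of \Cref{lem:comp-poly-variants} and \Cref{cor:eulSSS} obtained by comparing roots with multiplicity. Factoring out the eigenvalue $0$ (guaranteed by zero row sums in each case) before substituting $n-x$ (or $2n-x$) supplies the bookkeeping the paper leaves implicit, and it correctly explains why exactly one copy of $0$ is removed.
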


In the case that $D$ is not Eulerian, a relation similar to \cref{eq:comp} holds for some of the eigenvalues of $\SSS$.

\begin{corollary}
Let $D$ be a simple digraph on $n$ vertices
and suppose $v$ is an eigenvector for $\SSS(D)$ with eigenvalue $\mu$ such that $v\perp \mathbf 1$. Then $v$ is an eigenvector for $\SSS(\overleftarrow D)$ with eigenvalue $n-\mu$.
\end{corollary}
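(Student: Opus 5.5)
The plan is to compute $\SSS(\overleftarrow D)$ directly in terms of $\SSS(D)$ and then apply it to $v$. This computation shows that the statement, read literally with $\overleftarrow D$, cannot hold in general. I will record a counterexample and then indicate the version the argument does prove.

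By \Cref{lem:reverse-all}, $\SSS(\overleftarrow D)=\mathcal S_{\mathrm{in}}(D)$. Comparing the two formulas in \cref{lem:prim_lap}(2) gives
\[
\SSS(\overleftarrow D)=\SSS(D)+\bigl(D_{\mathrm{in}}(D)-D_{\mathrm{out}}(D)\bigr).
\]
Applying this to $v$ yields
\[
\SSS(\overleftarrow D)v=\mu v+\bigl(D_{\mathrm{in}}(D)-D_{\mathrm{out}}(D)\bigr)v.
\]
So the orthogonality $v\perp\mathbf 1$ plays no role for the reverse. The vector $v$ is an eigenvector of $\SSS(\overleftarrow D)$ exactly when $(D_{\mathrm{in}}-D_{\mathrm{out}})v$ is parallel to $v$. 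In particular, whenever $D$ is Eulerian the eigenvalue is $\mu$, not $n-\mu$.

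A concrete counterexample is the empty digraph on $n=2$ vertices. Here $\SSS(D)=\SSS(\overleftarrow D)=0$, and $v=(1,-1)^{\mathsf T}\perp\mathbf 1$ has $\mu=0$, but its eigenvalue for $\SSS(\overleftarrow D)$ is $0\neq 2=n-\mu$. So the literal statement is false, and the main step of a proof is to identify the correct digraph in place of $\overleftarrow D$.

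The relation $\mu\mapsto n-\mu$ comes from the identity in \Cref{lem:comp-all}(3), which concerns the complement. It gives
\[
\SSS(\overline D)=nI-J-\SSS(D).
\]
Since $v\perp\mathbf 1$, we have $Jv=\mathbf 1\mathbf 1^{\mathsf T}v=0$, so $\SSS(\overline D)v=nv-\mu v=(n-\mu)v$. This is the intended conclusion, with $\overleftarrow D$ replaced by $\overline D$, and I would present the corollary in that corrected form, noting the typo. The only delicate point is justifying $Jv=0$, which is immediate from $v\perp\mathbf 1$ since $\SSS(D)$ is real symmetric and $v$ can be taken real.
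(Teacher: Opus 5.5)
Your reading is correct. The result sits in the complement section, so $\overleftarrow D$ is a typo for $\overline D$, and your empty-digraph counterexample to the literal statement is valid. Your proof via $\SSS(\overline D)=nI-J-\SSS(D)$ from \Cref{lem:comp-all} together with $Jv=0$ is the intended one-line argument; the paper gives no separate proof.

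One small quibble: $Jv=0$ does not require $v$ to be real. Since $\mathbf 1$ is real, $v\perp\mathbf 1$ simply means $\sum_i v_i=0$.
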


\section{Line Digraphs}\label{sec:line_digraphs}

Line digraphs exhibit several intriguing properties, particularly regarding their spectra.
 
\begin{definition}
Let $D=(V,E)$ be a digraph. 
The \defin{line digraph} $L_D$ of $D$ has vertex set $V(L_D)\coloneqq E$, and there is an arc from $(i,j)$ to $(j,k)$ in $L_D$ whenever $(i,j),(j,k)\in E$.
\end{definition}

One notable characteristic is the frequent appearance of the eigenvalue 0, often with high multiplicity. 

\begin{theorem}{\rm\cite[Theorem 2.3]{MR2312328}}
Let $D$ be a digraph of order $n$ and size $m$, and $L_D$ its line digraph.  
Then
\[
\phi_{A(L_D)}(x)=x^{m-n}\phi_{A(D)}(x).
\]
\end{theorem}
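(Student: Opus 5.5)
The plan is to factor both adjacency matrices through two incidence-type matrices and then apply the identity $\det(xI_m-XY)=x^{m-n}\det(xI_n-YX)$. For an $n\times m$ matrix $P$ and an $m\times n$ matrix $Q$ this identity holds for all $x$, with the convention that negative powers are cleared when $m<n$.

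\textbf{The two matrices.} Index the vertices of $D$ by $v_1,\dots,v_n$ and the arcs by $e_1,\dots,e_m$, keeping parallel arcs as distinct columns. Define the \emph{head matrix} $H$ as the $m\times n$ 0/1 matrix with $H_{e,v}=1$ exactly when $v$ is the head of $e$. Define the \emph{tail matrix} $T$ as the $n\times m$ 0/1 matrix with $T_{v,e}=1$ exactly when $v$ is the tail of $e$.

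\textbf{Step 1: the two factorizations.} First compute $(HT)_{e,f}$. It is the number of vertices that are both the head of $e$ and the tail of $f$. This is $1$ precisely when $e=(i,j)$ and $f=(j,k)$, and $0$ otherwise, so $HT=A(L_D)$. Note that this includes the case $k=i$, since 2-cycles and antiparallel arcs give arcs of $L_D$. It also includes $e=f$ exactly when $e$ is a loop, and $D$ is loopless, so no diagonal entries arise.

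Next compute $(TH)_{u,v}$. It is the number of arcs with tail $u$ and head $v$, which is $A_{uv}$. Hence $TH=A(D)$, and this holds with multiplicities counted.

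\textbf{Step 2: the determinant identity.} Apply Sylvester's determinant identity, $\det(I_m-\tfrac1x HT)=\det(I_n-\tfrac1x TH)$. Multiplying through gives
\[
\det(xI_m-HT)=x^{m-n}\det(xI_n-TH)
\]
as rational functions. Equivalently, $HT$ and $TH$ share their nonzero eigenvalues with multiplicities, and the remaining eigenvalues are zeros. Substituting Step 1 yields $\phi_{A(L_D)}(x)=x^{m-n}\phi_{A(D)}(x)$.

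\textbf{Main obstacle.} The point needing care is the case $m<n$. Here the identity shows that $x^{n-m}$ must divide $\phi_{A(D)}(x)$. This is indeed forced, because $\operatorname{rank} A(D)=\operatorname{rank}(TH)\le m$, so $A(D)$ has at least $n-m$ zero eigenvalues. The statement should therefore be read as an identity of polynomials, or of rational functions, in that case. Apart from this, I only need to confirm that the multigraph conventions, namely distinct columns for parallel arcs and $A_{ij}$ counting arcs, make $TH$ equal to $A(D)$ exactly.
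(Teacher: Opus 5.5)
Your argument is correct and follows essentially the route the paper relies on: the paper only cites this theorem, but its \cref{bb} is exactly your factorization (with $T=B^{+}$ and $H=(B^{-})^{\mathsf T}$), and your Sylvester step is the special case of \cref{lem:det-lemma}, which is the same mechanism the paper uses, with degree matrices added, in the proof of \cref{thm:outlap-line-general}. Your treatment of the case $m<n$ via $\operatorname{rank}A(D)\le m$, which makes the identity hold as polynomials, is also sound.
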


An analogous result does not apply to the out-degree Laplacian matrix as demonstrated in the following example.
First we introduce abbreviated notation for the characteristic polynomial of the out-degree Laplacian of a digraph $D$.

\begin{notation}\rm
For a digraph $D$, let $\phi_D(x)\coloneqq\phi_{\mathcal L_{\mathrm{out}}(D)}(x)$.
\end{notation}

\begin{example}\label{ex:line}\rm
Consider the digraph in \cref{fig:enter-label}.
\begin{figure}[!ht]
    \centering
    \subfloat[\centering ]{\includegraphics[scale=0.75]{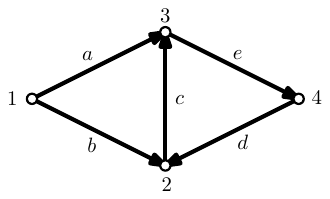} }%
    \qquad
    \subfloat[\centering ]{\includegraphics[scale=0.75]{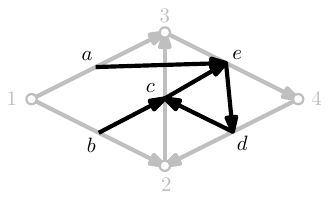} }
    \caption{A digraph of order 4 with its corresponding line digraph}
    \label{fig:enter-label}
\end{figure}
Order the vertices of $D$ as $1,2,3,4$ and the vertices of $L_D$ (that is, the arcs of $D$) as
\[
(1,3),\ (1,2),\ (2,3),\ (4,2),\ (3,4),
\]
we have
\[
A(D) = \begin{bmatrix}
 0 & 1 & 1 & 0 \\
0 & 0 & 1 & 0 \\
0 & 0 & 0 & 1 \\
0 & 1 & 0 & 0
\end{bmatrix},\qquad
A(L_D) = \begin{bmatrix}
 0 & 0 & 0 & 0 & 1 \\
0 & 0 & 1 & 0 & 0 \\
0 & 0 & 0 & 0 & 1 \\
0 & 0 & 1 & 0 & 0 \\
0 & 0 & 0 & 1 & 0
\end{bmatrix}.
\]
Moreover,
\[
{\mathcal L}_{\mathrm{out}}(D)=D_{\mathrm{out}}(D)-A(D) = \begin{bmatrix}
 2 & -1 & -1 & 0 \\
0 & 1 & -1 & 0 \\
0 & 0 & 1 & -1 \\
0 & -1 & 0 & 1
\end{bmatrix},
\]
and
\[
{\mathcal L}_{\mathrm{out}}(L_D)=D_{\mathrm{out}}(L_D)-A(L_D) = \begin{bmatrix}
 1 & 0 & 0 & 0 & -1 \\
0 & 1 & -1 & 0 & 0 \\
0 & 0 & 1 & 0 & -1 \\
0 & 0 & -1 & 1 & 0 \\
0 & 0 & 0 & -1 & 1
\end{bmatrix}.
\]
Hence $\phi_D(x) = x^4-5x^3+9x^2-6x$, and $\phi_{L_D}(x) = x^5 - 5x^4 + 10x^3 - 9x^2 + 3x$. Thus, $\phi_{L_D}(x)\neq x^{m-n}\,\phi_D(x)$, so the adjacency relation
does not extend in the same simple zero-padding form to the out-degree Laplacian.
However, we observe that $\phi_{L_D}(x)=\frac{(x-1)^2}{x-2}\phi_D(x)$ and prove in \cref{thm:outlap-line-general} a relationship between $\phi_D$ and $\phi_{L_D}$ in the general case involving the in-degrees and out-degrees of the vertices of $D$. 
\end{example}

Let $D=(V,E)$ be a digraph with $|V|=n$ and $|E|=m$.
Index the columns of the following matrices by arcs $e\in E$ and rows by vertices $v\in V$.

\noindent The \defin{out-incidence matrix} $B^{+}\in\{0,1\}^{n\times m}$ is defined by
\[
(B^{+})_{v,e}=
\begin{cases}
1 & \text{if $e=(v,w)$ for some $w$},\\
0 & \text{otherwise.}
\end{cases}
\]
Similarly, the \defin{in-incidence matrix} $B^{-}\in\{0,1\}^{n\times m}$ is defined by
\[
(B^{-})_{v,e}=
\begin{cases}
1 & \text{if $e=(w,v)$ for some $w$},\\
0 & \text{otherwise.}
\end{cases}
\]

\begin{lemma}{\rm\cite[Lemma 2.2]{MR2312328}}\label{bb}
Let $D$ be a digraph of order $n$, size $m$, and with adjacency matrix $A(D)$, and let $L_D$ be its line digraph.
Let $B^+$ and $B^-$ be the out-incidence and in-incidence matrices of $D$, respectively.
Then
\[
A(D)=B^{+}(B^{-})^{\mathsf T}
\qquad\text{and}\qquad
A(L_D)=(B^{-})^{\mathsf T}B^{+}.
\]
\end{lemma}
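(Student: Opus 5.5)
The plan is to verify both identities entrywise, directly from the definitions of $B^{+}$ and $B^{-}$. Only the definition of matrix multiplication is needed; the one thing to be careful about is multiple arcs.

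For the first identity, fix vertices $u,v\in V$. Then
\[
\bigl(B^{+}(B^{-})^{\mathsf T}\bigr)_{u,v}=\sum_{e\in E}(B^{+})_{u,e}(B^{-})_{v,e}.
\]
A summand equals $1$ exactly when the arc $e$ has tail $u$ and head $v$, and equals $0$ otherwise. So the sum counts the arcs from $u$ to $v$, which is $A_{uv}$ by definition.

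Because $A_{uv}$ is defined as the \emph{number} of such arcs, this count works even when $D$ has multiple arcs. When $u=v$, looplessness forces every summand to vanish, so the diagonal entry is $0=A_{uu}$.

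For the second identity, fix arcs $e,f\in E$. Then
\[
\bigl((B^{-})^{\mathsf T}B^{+}\bigr)_{e,f}=\sum_{w\in V}(B^{-})_{w,e}(B^{+})_{w,f}.
\]
A summand is nonzero only when $w$ is both the head of $e$ and the tail of $f$. Since $e$ has exactly one head, at most one $w$ contributes. Hence the entry is $1$ if the head of $e$ equals the tail of $f$, and $0$ otherwise.

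It remains to compare this with $A(L_D)$. By the definition of the line digraph, there is an arc from $e=(i,j)$ to $f=(j',k)$ in $L_D$ precisely when $j=j'$, so the entry above is exactly $A(L_D)_{e,f}$.

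The only point needing care is the diagonal case $e=f$. If $e=(i,j)$, the condition "head of $e$ equals tail of $f$" would require $j=i$, which would make $e$ a loop. Since $D$ is loopless, this never happens, so the diagonal entries of both $(B^{-})^{\mathsf T}B^{+}$ and $A(L_D)$ are $0$. Thus $L_D$ is loopless and the two matrices agree on the diagonal as well.

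I do not expect any real obstacle. The main point is to state explicitly that parallel arcs are distinct vertices of $L_D$ and that rows and columns are indexed consistently by $E$, so that the convention $A(L_D)_{e,f}\in\{0,1\}$ is consistent.
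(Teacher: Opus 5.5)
Your entrywise verification is correct and complete, including the checks that parallel arcs are counted correctly in $B^{+}(B^{-})^{\mathsf T}$ and that looplessness of $D$ makes the diagonals vanish in both identities. The paper gives no proof of this lemma and simply cites it from the literature; your direct computation from the definitions of $B^{+}$ and $B^{-}$ is the natural argument and needs nothing further.
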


We now state the matrix determinant lemma.

\begin{lemma}{\rm\cite{MR2978290}}\label{lem:det-lemma}
Let $X\in\mathbb F^{m\times m}$ be invertible,
$U\in\mathbb F^{m\times r}$, and $V\in\mathbb F^{r\times m}$.
Then
\[
\det(X+UV)=\det(X)\det(I_r+VX^{-1}U).
\]
In particular, for $X=I_m$, $\det(I_m+UV)=\det(I_r+VU)$.
\end{lemma}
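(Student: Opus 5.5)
The plan is to reduce the general statement to the special case $X=I_m$. Then I would prove that special case with a block-matrix factorization, so that nothing beyond multiplicativity of the determinant and the determinant of block-triangular matrices is needed. This works over an arbitrary field $\mathbb F$.

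\emph{Step 1 (reduction).} Since $X$ is invertible, we can write $X+UV=X\,(I_m+X^{-1}UV)$. Multiplicativity of the determinant gives $\det(X+UV)=\det(X)\det(I_m+U'V)$, where $U'\coloneqq X^{-1}U\in\mathbb F^{m\times r}$. It therefore suffices to show $\det(I_m+U'V)=\det(I_r+VU')$. Indeed, $VU'=VX^{-1}U$, which is exactly the claimed formula.

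\emph{Step 2 (the case $X=I_m$).} Consider the $(m+r)\times(m+r)$ block matrix
\[
M=\begin{bmatrix} I_m & U\\ -V & I_r\end{bmatrix}.
\]
I would factor $M$ in two ways, each a direct block multiplication:
\[
M=\begin{bmatrix} I_m & 0\\ -V & I_r\end{bmatrix}\begin{bmatrix} I_m & U\\ 0 & I_r+VU\end{bmatrix}
=\begin{bmatrix} I_m+UV & U\\ 0 & I_r\end{bmatrix}\begin{bmatrix} I_m & 0\\ -V & I_r\end{bmatrix}.
\]
Checking these is routine. For the first factorization, the lower-right block is $-VU+I_r+VU=I_r$. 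For the second, the upper-left block is $I_m+UV-UV=I_m$.

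Every factor is block triangular, so its determinant is the product of the determinants of its diagonal blocks. The first factorization gives $\det M=\det(I_r+VU)$, and the second gives $\det M=\det(I_m+UV)$. Hence $\det(I_m+UV)=\det(I_r+VU)$.

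\emph{Main obstacle.} The only genuine idea is choosing the auxiliary block matrix $M$ and its two triangular factorizations; after that, everything is bookkeeping. The one point to watch is that the sign convention in $M$ ($+U$ above, $-V$ below) is exactly what makes the two Schur-complement-type factorizations come out as $I_r+VU$ and $I_m+UV$ rather than with minus signs.
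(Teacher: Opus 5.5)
Your proof is correct. The reduction $X+UV=X(I_m+X^{-1}UV)$ is valid, both block factorizations of $M$ multiply out as you claim, and the block-triangular determinant rule holds over any field, so you get $\det(I_m+UV)=\det(I_r+VU)$ and hence the general identity.

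The paper gives no proof of its own here; it only cites the lemma as a standard fact. Your argument, which evaluates one auxiliary block matrix through two triangular (i.e.\ Schur-complement) factorizations, is the standard proof of the matrix determinant lemma and uses the same mechanism as the paper's \cref{schur}.
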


\begin{proposition}\label{thm:outlap-line-general}
Let $D=(V,E)$ be a digraph of order $n$ and size $m$, and $L_D$ its line digraph.  
Then
\[
\phi_{L_D}(x)\prod_{v\in V}(x-d_v^+)
=
\phi_D(x)\prod_{v\in V}(x-d_v^+)^{d_v^-}.
\]
\end{proposition}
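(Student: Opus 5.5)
The plan is to write $xI_m-\mathcal L_{\mathrm{out}}(L_D)$ as an invertible diagonal matrix plus a low-rank product built from the incidence matrices of \cref{bb}, and then apply the matrix determinant lemma (\cref{lem:det-lemma}) to pass from an $m\times m$ determinant to an $n\times n$ one.

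\emph{Step 1: the out-degree matrix of $L_D$.} A vertex $e=(u,v)$ of $L_D$ has out-neighbours exactly the arcs $(v,k)\in E$, so its out-degree in $L_D$ is $d_v^+$, the out-degree of its head. Let $\Delta$ be the $m\times m$ diagonal matrix with $\Delta_{ee}=d^+_{\mathrm{head}(e)}$. Then $D_{\mathrm{out}}(L_D)=\Delta$. By \cref{bb},
\[
xI_m-\mathcal L_{\mathrm{out}}(L_D)=(xI_m-\Delta)+(B^-)^{\mathsf T}B^+.
\]

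\emph{Step 2: apply the determinant lemma.} For $x$ different from every $d_v^+$, set $X=xI_m-\Delta$, $U=(B^-)^{\mathsf T}$ and $V=B^+$ in \cref{lem:det-lemma}. This gives
\[
\phi_{L_D}(x)=\det(X)\,\det\!\bigl(I_n+B^+X^{-1}(B^-)^{\mathsf T}\bigr).
\]
Each vertex $v$ is the head of exactly $d_v^-$ arcs, so $\det X=\prod_{v\in V}(x-d_v^+)^{d_v^-}$.

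\emph{Step 3: identify the $n\times n$ factor.} This is the only step needing care. The $(u,w)$ entry of $B^+X^{-1}(B^-)^{\mathsf T}$ is
\[
\sum_{e=(u,w)}\frac{1}{x-d_w^+}=\frac{A_{uw}}{x-d_w^+},
\]
since the sum runs over arcs with tail $u$ and head $w$. Hence this matrix equals $A(xI_n-D_{\mathrm{out}}(D))^{-1}$, the same factorization pattern as $A=B^+(B^-)^{\mathsf T}$ in \cref{bb}. Therefore
\[
I_n+A(xI_n-D_{\mathrm{out}})^{-1}=(xI_n-D_{\mathrm{out}}+A)(xI_n-D_{\mathrm{out}})^{-1},
\]
and its determinant is $\phi_D(x)/\prod_{v}(x-d_v^+)$, because $xI_n-\mathcal L_{\mathrm{out}}(D)=xI_n-D_{\mathrm{out}}+A$.

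\emph{Step 4: conclude.} Combining Steps 2 and 3 and clearing the denominator gives the stated identity for all $x\notin\{d_v^+\}$. Both sides are polynomials in $x$, so the identity holds identically. The main (mild) obstacles are getting the head/tail conventions right in Steps 1 and 3, and handling the non-generic values of $x$, which the polynomial-identity argument covers.
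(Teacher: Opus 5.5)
Your proof is correct and follows essentially the same route as the paper: both write $xI_m-\mathcal L_{\mathrm{out}}(L_D)$ as the diagonal matrix $xI_m-D_{\mathrm{out}}(L_D)$ plus $(B^-)^{\mathsf T}B^+$. Both then apply the determinant lemma to reduce to $\det(I_n+A(xI_n-D_{\mathrm{out}}(D))^{-1})$ and finish with a polynomial-identity argument. The only cosmetic difference is in how $B^+X^{-1}(B^-)^{\mathsf T}=A(xI_n-D_{\mathrm{out}}(D))^{-1}$ is obtained: you verify it entrywise, whereas the paper derives it from the intertwining relation $D_{\mathrm{out}}(L_D)(B^-)^{\mathsf T}=(B^-)^{\mathsf T}D_{\mathrm{out}}(D)$ followed by Sylvester's identity.
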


\begin{proof}
We set $B\coloneqq B^+$, $C\coloneqq (B^-)^{\mathsf T}$.
Then, by \Cref{bb}, $A(D)=BC,$ and $A(L_D)=CB$.
Let $D_1\coloneqq D_{\mathrm{out}}(D)$ and $D_2\coloneqq D_{\mathrm{out}}(L_D)$.
Thus
\[
\LLL(D)=D_1-BC,\qquad \LLL(L_D)=D_2-CB.
\]
For an arc $e=(u,v)$ of $D$, the out-degree of the corresponding vertex $e$ in
$L_D$ is $d_v^+$. Hence, for every arc $e$ and vertex $w$,
\[
(D_2C)_{e,w}
=d_{\operatorname{head}(e)}^+\,C_{e,w}
=C_{e,w}\,d_w^+
=(CD_1)_{e,w}.
\]
Therefore $D_2C=CD_1$.
Now we set $P\coloneqq xI_n-D_1$, and $Q\coloneqq xI_m-D_2$.
The relation $D_2C=CD_1$ gives $QC=CP$.
For all $x$ for which $P$ and $Q$ are invertible, this implies
$Q^{-1}C=CP^{-1}$.
Therefore, we obtain
\begin{align*}
\phi_{L_D}(x)
&=\det(xI_m-\LLL(L_D))\\
&=\det(Q+CB)\\
&=\det(Q)\det(I_m+Q^{-1}CB)\\
&=\det(Q)\det(I_m+CP^{-1}B)\\
&=\det(Q)\det(I_n+BCP^{-1})\\
&=\det(Q)\frac{\det(P+BC)}{\det(P)}\\
&=\frac{\det(xI_m-D_2)}{\det(xI_n-D_1)}\,\phi_D(x).
\end{align*}
Multiplying by $\det(xI_n-D_1)$ gives
$\phi_{L_D}(x)\det(xI_n-D_1)
=
\phi_D(x)\det(xI_m-D_2)$.
Both sides are polynomials in $x$, so the identity holds for all $x$.
Finally,
\[
\det(xI_n-D_1)=\prod_{v\in V}(x-d_v^+),
\]
and
\[
\det(xI_m-D_2)
=
\prod_{e=(u,v)\in E}(x-d_v^+)
=
\prod_{v\in V}(x-d_v^+)^{d_v^-}.
\]
Substituting these two products proves the claim.
\end{proof}


\begin{corollary}\label{thm:outlap-line-regular-phi}
Let $D$ be a digraph on $n$ vertices and $m\geq 1$ arcs, and suppose $D$ is
$r$-out-regular. Then $\phi_{L_D}(x)=(x-r)^{m-n}\,\phi_D(x)$.
\end{corollary}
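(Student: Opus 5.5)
This corollary is a direct specialization of \Cref{thm:outlap-line-general}. Since $D$ is $r$-out-regular, $d_v^+=r$ for every $v\in V$, so both products in that proposition collapse to powers of $(x-r)$. On the left we get
\[
\prod_{v\in V}(x-d_v^+)=(x-r)^n.
\]
On the right we get
\[
\prod_{v\in V}(x-d_v^+)^{d_v^-}=(x-r)^{\sum_v d_v^-}=(x-r)^m,
\]
using the handshake identity $\sum_{v} d_v^-=m$: each arc has exactly one head.

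The proposition then reads $\phi_{L_D}(x)(x-r)^n=\phi_D(x)(x-r)^m$. This is an identity in the polynomial ring $\mathbb R[x]$, which is an integral domain. Also $m=\sum_v d_v^+=nr$, and $m\ge1$ forces $r\ge1$, so $m-n=n(r-1)\ge 0$. Hence $(x-r)^m=(x-r)^n(x-r)^{m-n}$ is a genuine factorization into polynomials. Cancelling the nonzero polynomial $(x-r)^n$ gives $\phi_{L_D}(x)=(x-r)^{m-n}\phi_D(x)$, as claimed.

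The only point needing care is this cancellation. We must cancel as polynomials, or equivalently argue for $x\neq r$ and extend by continuity, rather than divide pointwise. We also need $m-n\ge0$ so that the result is a polynomial identity; the hypothesis $m\ge 1$ guarantees exactly this. There is no real obstacle.
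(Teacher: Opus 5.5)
Your proof is correct and is exactly the intended argument: the paper states this corollary without proof as an immediate specialization of \Cref{thm:outlap-line-general}, where $d_v^+=r$ makes the two products $(x-r)^n$ and $(x-r)^{\sum_v d_v^-}=(x-r)^m$. Your check that $m-n=n(r-1)\ge 0$, followed by cancelling $(x-r)^n$ in $\mathbb{R}[x]$, supplies the one detail the paper leaves implicit.
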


\begin{corollary}
Let $D$ be an Eulerian digraph on vertices $v_1,\dots,v_n$ with at least one arc, and write
$d_i=d_{v_i}^+=d_{v_i}^-$. Then
\[
\phi_{L_D}(x)=\phi_D(x)\prod_{i=1}^n(x-d_i)^{d_i-1}.
\]
\end{corollary}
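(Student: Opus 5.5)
This is a direct specialization of \Cref{thm:outlap-line-general}. For an Eulerian digraph we have $d_v^-=d_v^+=d_i$ at each vertex $v_i$. Substituting this into the identity there gives
\[
\phi_{L_D}(x)\prod_{i=1}^n(x-d_i)=\phi_D(x)\prod_{i=1}^n(x-d_i)^{d_i}.
\]
The remaining task is to divide by $\prod_i(x-d_i)$. This is legitimate in the polynomial ring $\mathbb{R}[x]$, since the product is a nonzero polynomial and $\mathbb{R}[x]$ is an integral domain.

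The only subtlety is isolated vertices. If $d_i=0$, the factor $(x-d_i)^{d_i-1}=x^{-1}$ is not a polynomial, so the claimed formula must be read as a rational-function identity, or we must check that $x$ divides $\phi_D(x)$ enough times. I would handle this as follows.

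\begin{itemize}
\item In $\mathbb{R}(x)$ the cancellation is exact, so the identity $\phi_{L_D}(x)=\phi_D(x)\prod_i(x-d_i)^{d_i-1}$ holds there.
\item To see that the right-hand side is genuinely a polynomial, let $k$ be the number of isolated vertices. Each isolated vertex is a sink SCC of $D$.
\item Since $D$ has at least one arc, some Eulerian component is nontrivial and strongly connected, and so contributes at least one more sink SCC.
\item By \Cref{cor:zero-mult-coreaches}, $x^{k+1}$ divides $\phi_D(x)$. Hence the factor $x^{-k}$ is absorbed and the right-hand side is a polynomial.
\end{itemize}

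This matches the fact that $L_D$ ignores isolated vertices. I expect no step to be a real obstacle; the only care needed is this isolated-vertex bookkeeping. If one assumes no isolated vertices, the result is immediate after cancellation.
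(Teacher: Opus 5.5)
Your proposal is correct and follows the paper's route: the corollary is obtained by substituting $d_v^-=d_v^+=d_i$ into \Cref{thm:outlap-line-general} and cancelling $\prod_i(x-d_i)$. Your isolated-vertex remark is a valid refinement that the paper leaves implicit (when some $d_i=0$ the identity must be read in $\mathbb{R}(x)$), but the divisibility check via \Cref{cor:zero-mult-coreaches} is unnecessary: the right-hand side equals the polynomial $\phi_{L_D}(x)$ in $\mathbb{R}(x)$, so it is automatically a polynomial.
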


\begin{corollary}
Let $D$ be a tournament on vertices $v_1,\dots,v_n$, and write $d_i=d_{v_i}^+$. Then
\[
\phi_{L_D}(x)=\phi_D(x)\prod_{i=1}^n(x-d_i)^{n-d_i-2}.
\]
\end{corollary}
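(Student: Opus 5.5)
The plan is to specialize \Cref{thm:outlap-line-general} to a tournament, in the same way the two preceding corollaries were obtained. The only extra input is the relation between in-degrees and out-degrees in a tournament.

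\textbf{Step 1: degrees.} Since $D$ is a tournament, every pair of distinct vertices is joined by exactly one arc. Hence for each $i$ we have $d_{v_i}^+ + d_{v_i}^- = n-1$, that is, $d_{v_i}^- = n-1-d_i$.

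\textbf{Step 2: substitute into \Cref{thm:outlap-line-general}.} This gives the polynomial identity
\[
\phi_{L_D}(x)\prod_{i=1}^n (x-d_i) \;=\; \phi_D(x)\prod_{i=1}^n (x-d_i)^{\,n-1-d_i}.
\]

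\textbf{Step 3: divide.} The product $\prod_{i}(x-d_i)$ is a nonzero polynomial, so we may divide by it in the field of rational functions $\mathbb{R}(x)$. Dividing factor by factor lowers each exponent $n-1-d_i$ by one, which yields exactly
\[
\phi_{L_D}(x)=\phi_D(x)\prod_{i=1}^n (x-d_i)^{\,n-d_i-2}.
\]

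\textbf{Main subtlety.} The one point needing care is that an exponent $n-d_i-2$ can equal $-1$. This happens exactly when $d_i=n-1$, i.e.\ when $v_i$ is a source. A tournament has at most one such vertex.

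\begin{itemize}
\item In that case the right-hand side should be read as an identity of rational functions. Equivalently, $(x-(n-1))$ divides $\phi_D(x)$, since the identity in Step 2 is a polynomial identity and the left-hand side is a polynomial.
\item One can also see this divisibility directly. For a source $v_i$, column $i$ of $\LLL(D)$ has only the diagonal entry $n-1$, because no arc enters $v_i$. Expanding $\det(xI-\LLL(D))$ along that column gives the factor $(x-(n-1))$.
\end{itemize}

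Apart from this, the argument is a direct substitution and involves no further obstacle.
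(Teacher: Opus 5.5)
Your proposal is correct and is exactly the intended argument: the paper states this corollary without further proof as the specialization of \Cref{thm:outlap-line-general} obtained by substituting $d_v^-=n-1-d_v^+$. Your remark about a source vertex is a correct clarification that the paper leaves implicit: there the exponent $n-d_i-2$ equals $-1$, so the identity holds as one of rational functions, and $(x-(n-1))$ divides $\phi_D(x)$ by expanding along that vertex's column.
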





We next demonstrate that the spectrum of the remaining digraph Laplacians $\SSS$, $\KKK$ and $\mathcal L_{\mathrm{inc}}$ of a line digraph splits into an two parts: one consisting of ``degrees'' and one coming from the incidence interaction from the original digraph.

\begin{proposition}\label{prop:sss-line-2n}
Let $D=(V,E)$ be a digraph of order $n$ and size $m$ 
with $B^+$ and $B^-$ its out-incidence and in-incidence matrices, respectively.
Define the diagonal matrix $X_{\SSS}\in\mathbb{R}^{m\times m}$ by
\[
(X_{\SSS})_{e,e}\coloneqq x-d^+_{\operatorname{head}(e)}.
\] 
Then 
\[
\phi_{\SSS(L_D)}(x)
=\det(X_{\SSS})\,
\det\Bigl(I_{2n}+\tfrac12
\begin{bmatrix} B^+ \\[1mm] B^- \end{bmatrix}
X_{\SSS}^{-1}
\begin{bmatrix} (B^-)^{\mathsf T} & (B^+)^{\mathsf T} \end{bmatrix}
\Bigr),
\]
where
\[
\det(X_{\SSS})
=\prod_{v\in V(D)} (x-d_v^+)^{d_v^-}.\]
\end{proposition}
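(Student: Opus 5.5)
We follow the same pattern as the proof of \Cref{thm:outlap-line-general}, but with a rank-$2n$ low-rank perturbation in place of the rank-$n$ one.

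Set $B\coloneqq B^+$ and $C\coloneqq (B^-)^{\mathsf T}$. By \Cref{bb}, $A(L_D)=CB$, and hence $A(L_D)^{\mathsf T}=B^{\mathsf T}C^{\mathsf T}=(B^+)^{\mathsf T}B^-$. As computed in the proof of \Cref{thm:outlap-line-general}, the out-degree of the vertex $e$ of $L_D$ is $d^+_{\operatorname{head}(e)}$, so $D_{\mathrm{out}}(L_D)$ is diagonal with these entries. By \Cref{lem:prim_lap}(2), $\SSS(L_D)=D_{\mathrm{out}}(L_D)-\tfrac12\bigl(A(L_D)+A(L_D)^{\mathsf T}\bigr)$. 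Therefore
\[
xI_m-\SSS(L_D)=X_{\SSS}+\tfrac12\bigl((B^-)^{\mathsf T}B^+ + (B^+)^{\mathsf T}B^-\bigr).
\]

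The key step is to write the symmetric perturbation as a single product $UV$ of an $m\times 2n$ and a $2n\times m$ matrix. We take
\[
U=\begin{bmatrix} (B^-)^{\mathsf T} & (B^+)^{\mathsf T}\end{bmatrix}, \qquad V=\begin{bmatrix} B^+\\ B^-\end{bmatrix}.
\]
Block multiplication then gives $UV=(B^-)^{\mathsf T}B^+ + (B^+)^{\mathsf T}B^-$. Hence $xI_m-\SSS(L_D)=X_{\SSS}+\tfrac12 UV$.

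For every $x$ not equal to any out-degree $d_v^+$, the matrix $X_{\SSS}$ is invertible. Applying \Cref{lem:det-lemma} with $X=X_{\SSS}$, perturbation factor $\tfrac12 U$ and factor $V$ gives
\[
\det(xI_m-\SSS(L_D))=\det(X_{\SSS})\det\bigl(I_{2n}+\tfrac12 V X_{\SSS}^{-1}U\bigr),
\]
which is exactly the claimed formula. The identity is stated with $X_{\SSS}^{-1}$, so it is meant as an identity of rational functions in $x$, valid off the finitely many values $x=d_v^+$. If one wants a polynomial statement, it follows by clearing denominators and using continuity, as in the proof of \Cref{thm:outlap-line-general}.

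Finally, $\det(X_{\SSS})=\prod_{e\in E}(x-d^+_{\operatorname{head}(e)})$. Grouping the arcs by their head, each vertex $v$ is the head of exactly $d_v^-$ arcs, so $\det(X_{\SSS})=\prod_{v\in V}(x-d_v^+)^{d_v^-}$.

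The only delicate point is the bookkeeping of the block factorization: the order of the blocks in $U$ must be matched with the order in $V$ so that the cross terms $(B^-)^{\mathsf T}B^+$ and $(B^+)^{\mathsf T}B^-$ appear, not $(B^-)^{\mathsf T}B^-$ and $(B^+)^{\mathsf T}B^+$. The factor $\tfrac12$ can be placed on either side without affecting the result. Beyond this, the argument is routine.
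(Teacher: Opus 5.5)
Your proposal is correct and follows the paper's proof essentially verbatim. Both arguments identify $xI_m-D_{\mathrm{out}}(L_D)=X_{\SSS}$, factor $\tfrac12\bigl((B^-)^{\mathsf T}B^+ + (B^+)^{\mathsf T}B^-\bigr)$ through the same $m\times 2n$ and $2n\times m$ block matrices, and apply the matrix determinant lemma with $X=X_{\SSS}$; the only cosmetic difference is that the paper attaches the $\tfrac12$ to $V$ rather than $U$, which, as you note, is immaterial.
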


\begin{proof}
By definition, $\SSS(L_D)=D_{\mathrm{out}}(L_D)-\tfrac12\bigl(A(L_D)+A(L_D)^{\mathsf T}\bigr)$.
For an arc $e=(u\to v)$, the out-degree of $e$ in $L_D$ equals $d_v^+$, so
$xI_m-D_{\mathrm{out}}(L_D)=X_{\SSS}$.
By \cref{bb}, $A(L_D)=(B^-)^{\mathsf T}B^+$ and $A(L_D)^{\mathsf T}=(B^+)^{\mathsf T}B^-$, hence $xI_m-\SSS(L_D)
= X_{\SSS}+\tfrac12\Bigl((B^-)^{\mathsf T}B^+ + (B^+)^{\mathsf T}B^-\Bigr)
= X_{\SSS}+UV$, where
\[
U=\begin{bmatrix} (B^-)^{\mathsf T} & (B^+)^{\mathsf T}\end{bmatrix}\in\mathbb{R}^{m\times 2n},
\qquad
V=\tfrac12\begin{bmatrix} B^+ \\[1mm] B^- \end{bmatrix}\in\mathbb{R}^{2n\times m}.
\]
Applying~\Cref{lem:det-lemma} with $X=X_{\SSS}$ gives the stated formula.
The computation of $\det(X_{\SSS})$ is immediate from the definition of $X_{\SSS}$.
\end{proof}


\begin{proposition}\label{prop:kkk-line-2n}
Let $D$ be a digraph of order $n$ and size $m$ with $B^+$ and $B^-$ its out-incidence and in-incidence matrices. Then
\[
\phi_{\KKK(L_D)}(x)=x^{m-2n}\,
\det\Bigl(xI_{2n}+\frac12
\begin{bmatrix} B^+ \\[1mm] -B^- \end{bmatrix}
\begin{bmatrix} (B^-)^{\mathsf T} & (B^+)^{\mathsf T} \end{bmatrix}
\Bigr).
\]
\end{proposition}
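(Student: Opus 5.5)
The plan is to repeat the argument of \Cref{prop:sss-line-2n}, with the symmetric combination of $A(L_D)$ and its transpose replaced by the antisymmetric one, and with the diagonal part now just $xI_m$.

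First I compute the skew part. By \Cref{lem:prim_lap}, $\KKK(L_D)=\tfrac12\bigl(A(L_D)^{\mathsf T}-A(L_D)\bigr)$. By \Cref{bb}, $A(L_D)=(B^-)^{\mathsf T}B^+$ and $A(L_D)^{\mathsf T}=(B^+)^{\mathsf T}B^-$. Therefore
\[
xI_m-\KKK(L_D)=xI_m+\tfrac12\bigl((B^-)^{\mathsf T}B^+-(B^+)^{\mathsf T}B^-\bigr)=xI_m+UV,
\]
where
\[
U=\begin{bmatrix} (B^-)^{\mathsf T} & (B^+)^{\mathsf T}\end{bmatrix}\in\mathbb R^{m\times 2n},
\qquad
V=\tfrac12\begin{bmatrix} B^+ \\[1mm] -B^- \end{bmatrix}\in\mathbb R^{2n\times m}.
\]
The only care needed here is the sign: the minus sign is placed on the $B^-$ block of $V$, so that $UV$ reproduces $(B^-)^{\mathsf T}B^+-(B^+)^{\mathsf T}B^-$ with the correct signs.

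Next I apply \Cref{lem:det-lemma} with $X=xI_m$, valid for $x\neq 0$. This gives
\[
\phi_{\KKK(L_D)}(x)=x^m\det\bigl(I_{2n}+x^{-1}VU\bigr)=x^{m-2n}\det\bigl(xI_{2n}+VU\bigr),
\]
which is exactly the stated formula, since $VU=\tfrac12\begin{bmatrix} B^+ \\ -B^- \end{bmatrix}\begin{bmatrix} (B^-)^{\mathsf T} & (B^+)^{\mathsf T}\end{bmatrix}$.

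The main point needing attention is that $m-2n$ may be negative, so the identity should be read as an identity of rational functions. It holds for every nonzero $x$. Both sides are rational functions that agree on infinitely many points, hence they are equal as rational functions. In particular, $x^{2n-m}$ divides $\det(xI_{2n}+VU)$ whenever $m<2n$. I expect no further obstacle, since every other step is a direct substitution.
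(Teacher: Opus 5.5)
Your proposal is correct and matches the paper's proof essentially verbatim. Both use the same factorization $xI_m-\KKK(L_D)=xI_m+UV$, with the minus sign placed on the $B^-$ block of $V$, and then apply the matrix determinant lemma with $X=xI_m$. Your remark that the identity should be read as one of rational functions when $m<2n$ (valid for $x\neq 0$ and extended by agreement at infinitely many points) is worthwhile care that the paper leaves implicit.
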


\begin{proof}
Since $\KKK(L_D)=\tfrac12\bigl(A(L_D)^{\mathsf T}-A(L_D)\bigr)$ and
$A(L_D)=(B^-)^{\mathsf T}B^+$, we have
\[
xI_m-\KKK(L_D)
= xI_m+\tfrac12\Bigl((B^-)^{\mathsf T}B^+-(B^+)^{\mathsf T}B^-\Bigr)
= xI_m+UV,
\]
where
\[
U=\begin{bmatrix} (B^-)^{\mathsf T} & (B^+)^{\mathsf T}\end{bmatrix},
\qquad
V=\tfrac12\begin{bmatrix} B^+ \\[1mm] -B^- \end{bmatrix}.
\]
Applying~\Cref{lem:det-lemma} with $X=xI_m$ gives the stated formula.
\end{proof}


\begin{proposition}\label{prop:inc-line-2n}
Let $D=(V,E)$ be a digraph 
with $B^+$ and $B^-$ its out-incidence and in-incidence matrices, respectively.
For an arc $e=(u\to v)\in E$ set
$t_e\coloneqq d_v^++d_u^-$.
Define the diagonal matrix $X_{\mathrm{inc}}\in\mathbb{R}^{m\times m}$ by $(X_{\mathrm{inc}})_{e,e}\coloneqq x-t_e$.
Then
\[
\phi_{\mathcal{L}_\mathrm{inc}(L_D)}(x)
=\det(X_{\mathrm{inc}})\,
\det\Bigl(I_{2n}+
\begin{bmatrix} B^+ \\[1mm] B^- \end{bmatrix}
X_{\mathrm{inc}}^{-1}
\begin{bmatrix} (B^-)^{\mathsf T} & (B^+)^{\mathsf T} \end{bmatrix}
\Bigr),
\]
where $\det(X_{\mathrm{inc}})=\prod_{e\in E(D)}(x-t_e)$.
\end{proposition}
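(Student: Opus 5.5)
The plan is to write $xI_m-\mathcal L_{\mathrm{inc}}(L_D)$ as a diagonal matrix plus a rank-at-most-$2n$ product, and then apply the matrix determinant lemma (\Cref{lem:det-lemma}), exactly as in the proofs of \Cref{prop:sss-line-2n} and \Cref{prop:kkk-line-2n}.

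\textbf{Step 1: $L_D$ is loopless.} An arc of $L_D$ from $(i,j)$ to itself would require $(i,j)=(j,k)$, hence $i=j$, which is impossible since $D$ is loopless. So \Cref{lem:forget} applies to $L_D$ and gives
\[
\mathcal L_{\mathrm{inc}}(L_D)=D_{\mathrm{out}}(L_D)+D_{\mathrm{in}}(L_D)-\bigl(A(L_D)+A(L_D)^{\mathsf T}\bigr).
\]

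\textbf{Step 2: the diagonal part.} Fix a vertex $e=(u\to v)$ of $L_D$. Its out-neighbours are the arcs $(v,w)\in E$, so its out-degree is $d_v^+$. Its in-neighbours are the arcs $(w,u)\in E$, so its in-degree is $d_u^-$. Multiple arcs are handled automatically, since each parallel arc is a distinct vertex of $L_D$. Hence
\[
xI_m-D_{\mathrm{out}}(L_D)-D_{\mathrm{in}}(L_D)=X_{\mathrm{inc}}.
\]

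\textbf{Step 3: the off-diagonal part.} By \cref{bb}, $A(L_D)=(B^-)^{\mathsf T}B^+$ and $A(L_D)^{\mathsf T}=(B^+)^{\mathsf T}B^-$. Therefore
\[
xI_m-\mathcal L_{\mathrm{inc}}(L_D)=X_{\mathrm{inc}}+(B^-)^{\mathsf T}B^+ +(B^+)^{\mathsf T}B^- = X_{\mathrm{inc}}+UV,
\]
where
\[
U=\begin{bmatrix}(B^-)^{\mathsf T} & (B^+)^{\mathsf T}\end{bmatrix},\qquad
V=\begin{bmatrix}B^+\\ B^-\end{bmatrix}.
\]
Block multiplication confirms $UV=(B^-)^{\mathsf T}B^+ + (B^+)^{\mathsf T}B^-$. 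Unlike the $\SSS$ case, no factor $\tfrac12$ appears.

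\textbf{Step 4: apply the determinant lemma.} For every $x\notin\{t_e : e\in E\}$, the matrix $X_{\mathrm{inc}}$ is invertible. \Cref{lem:det-lemma} with $X=X_{\mathrm{inc}}$ then gives
\[
\det(X_{\mathrm{inc}}+UV)=\det(X_{\mathrm{inc}})\det\bigl(I_{2n}+VX_{\mathrm{inc}}^{-1}U\bigr),
\]
which is the stated formula. Here $\det(X_{\mathrm{inc}})=\prod_{e\in E}(x-t_e)$ because $X_{\mathrm{inc}}$ is diagonal.

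\textbf{Extending to all $x$.} The right-hand side is a rational function of $x$ that agrees with the polynomial $\phi_{\mathcal L_{\mathrm{inc}}(L_D)}(x)$ at all but finitely many points. So the identity holds as rational functions, and by continuity at every $x$ where the right-hand side is defined.

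\textbf{Main obstacle.} The only step needing care is the in-degree bookkeeping in Step 2, namely checking that $d^-_{L_D}(e)=d_u^-$ for the tail $u$ of $e$. The rest is a direct transcription of the argument for \Cref{prop:sss-line-2n}.
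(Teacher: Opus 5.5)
Your proposal is correct and matches the paper's proof essentially step for step. Both identify $xI_m-D_{\mathrm{out}}(L_D)-D_{\mathrm{in}}(L_D)$ with $X_{\mathrm{inc}}$ using $d^+_{L_D}(e)=d_v^+$ and $d^-_{L_D}(e)=d_u^-$, write $A(L_D)+A(L_D)^{\mathsf T}=UV$ via \cref{bb}, and apply \Cref{lem:det-lemma}. Your check that $L_D$ is loopless (so \Cref{lem:forget} applies) is a sound detail the paper leaves implicit. Your final extension paragraph is harmless but unnecessary, since the formula involves $X_{\mathrm{inc}}^{-1}$ and so is only asserted where $X_{\mathrm{inc}}$ is invertible.
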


\begin{proof}
By definition, we have $\mathcal L_{\mathrm{inc}}(L_D)=D_{\mathrm{total}}(L_D)-\bigl(A(L_D)+A(L_D)^{\mathsf T}\bigr)$, where $D_{\mathrm{total}}=D_{\mathrm{out}}+D_{\mathrm{in}}$.
For an arc $e=(u\to v)\in E$ where $u,v\in V$, the out-degree of $e$ in $L_D$ is $d_v^+$ and the in-degree is $d_u^-$,
so the total degree is $t_e=d_v^++d_u^-$, and therefore $xI_m-D_{\mathrm{tot}}(L_D)=X_{\mathrm{inc}}$.
Hence
\[
xI_m-\mathcal L_{\mathrm{inc}}(L_D)
= X_{\mathrm{inc}}+\Bigl((B^-)^{\mathsf T}B^+ + (B^+)^{\mathsf T}B^-\Bigr)
= X_{\mathrm{inc}}+UV,
\]
with
\[
U=\begin{bmatrix} (B^-)^{\mathsf T} & (B^+)^{\mathsf T}\end{bmatrix},
\qquad
V=\begin{bmatrix} B^+ \\[1mm] B^- \end{bmatrix}.
\]
Applying \Cref{lem:det-lemma} with $X=X_{\mathrm{inc}}$ gives the stated formula; $\det(X_{\mathrm{inc}})=\prod_{e}(x-t_e)$
follows from the definition of $X_{\mathrm{inc}}$.
\end{proof}

\section{Joins}\label{sec:join}

Here we analyze the out-degree Laplacian variants for the join of two digraphs.

\begin{definition}
Let $D_1=(V_1,E_1)$ and $D_2=(V_2,E_2)$ be two digraphs.
The \defin{direct sum} of $D_1$ and $D_2$, denoted $D=D_1\dotplus D_2$, is the digraph with $V(D)=V_1\sqcup V_2$ and $E(D)=E_1\cup E_2$ (where $V_1\cap V_2=\emptyset$).
The \defin{join} (or \defin{complete product}) of $D_1$ and $D_2$, denoted $D=D_1\triangledown D_2$, is obtained from $D_1\dotplus D_2$ by adding, for each $u \in V_1$ and $v \in V_2$, the two arcs $u \to v$ and $v \to u$ (that is, a $2$-cycle between every vertex of $D_1$ and every vertex of $D_2$).
\end{definition}

\begin{figure}[!ht]
\centering

\begin{subfigure}[t]{0.48\textwidth}
\centering
\begin{tikzpicture}[
  >=Stealth,
  vtx/.style={circle,draw,fill=white,inner sep=0pt,minimum size=5pt},
  arc/.style={->,line width=1.0pt,shorten >=2pt}
]
\coordinate (a1) at (0,1.2);
\coordinate (a2) at (0,0);
\coordinate (a3) at (0,-1.2);
\draw[arc] (a1)--(a2);
\draw[arc] (a2)--(a3);
\draw[arc] (a1)--(a3);
\node[vtx] (a1) at (0,1.2) {};
\node[vtx] (a2) at (0,0)   {};
\node[vtx] (a3) at (0,-1.2){};

\coordinate (b1) at (3.2,0.8);
\coordinate (b2) at (3.2,-0.8);
\draw[arc] (b1)--(b2);
\draw[arc] (b2)--(b1);
\node[vtx] (b1) at (3.2,0.8)  {};
\node[vtx] (b2) at (3.2,-0.8) {};

\end{tikzpicture}
\caption{The direct sum $D_1\dotplus D_2$.}
\end{subfigure}
\hfill
\begin{subfigure}[t]{0.48\textwidth}
\centering
\begin{tikzpicture}[
  >=Stealth,
  vtx/.style={circle,draw,fill=white,inner sep=0pt,minimum size=5pt},
  arc/.style={->,line width=1.0pt,shorten >=1pt}
]
\coordinate (a1) at (0,1.2);
\coordinate (a2) at (0,0);
\coordinate (a3) at (0,-1.2);
\coordinate (b1) at (3.2,0.8);
\coordinate (b2) at (3.2,-0.8);

\foreach \u in {a1,a2,a3}{%
  \foreach \v in {b1,b2}{%
    \draw[->,line width=1.0pt,shorten >=6pt] (\u)--(\v);%
    \draw[->,line width=1.0pt,shorten >=3pt] (\v)--(\u);%
  }%
}%

\draw[arc] (a1)--(a2);
\draw[arc] (a2)--(a3);
\draw[arc] (a1)--(a3);

\draw[arc] (b1)--(b2);
\draw[arc] (b2)--(b1);

\node[vtx] (a1) at (0,1.2) {};
\node[vtx] (a2) at (0,0)   {};
\node[vtx] (a3) at (0,-1.2){};

\node[vtx] (b1) at (3.2,0.8)  {};
\node[vtx] (b2) at (3.2,-0.8) {};

\end{tikzpicture}
\caption{The join $D_1\triangledown D_2$.}
\end{subfigure}

\caption{The direct sum and join of two digraphs $D_1$ and $D_2$, where $D_1$ is a directed path on three vertices and $D_2$ is a directed cycle on two vertices.}
\label{fig:sum-join}
\end{figure}

\cref{fig:sum-join} shows the direct sum and join of a directed path on three vertices and a directed cycle on two vertices.
To derive explicit expressions for the digraph Laplacian characteristic polynomials of a join of digraphs, we
need the following lemmas.
We use $J_{n_1,n_2}$ to denote the $n_1\times n_2$ all-ones matrix and $J_{n}\coloneqq J_{n,n}$.

\begin{lemma}{\rm\cite{meyer2023matrix}}
\label[lemma]{schur}
Let $ M_1, M_2, M_3 $, and $ M_4 $ be respectively $ p \times p $, $ p \times q $, $ q \times p $, and $ q \times q $ matrices.
If $M_4$ is invertible, then
\[\det \begin{bmatrix} 
    M_1 & M_2 \\
    M_3 & M_4 
\end{bmatrix} = \det(M_4) \cdot \det \left( M_1 - M_2 M_4^{-1} M_3 \right).\]
\end{lemma}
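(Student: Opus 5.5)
The plan is to prove the identity by a Gaussian block elimination, in the standard Schur-complement way. Write
\[
M=\begin{bmatrix} M_1 & M_2\\ M_3 & M_4\end{bmatrix}
\]
and use the invertibility of $M_4$ to factor $M$ as the product of a block upper-triangular matrix and a block lower-triangular matrix:
\[
\begin{bmatrix} M_1 & M_2\\ M_3 & M_4\end{bmatrix}
=
\begin{bmatrix} I_p & M_2M_4^{-1}\\ 0 & I_q\end{bmatrix}
\begin{bmatrix} M_1-M_2M_4^{-1}M_3 & 0\\ M_3 & M_4\end{bmatrix}.
\]

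The first step is to check this factorization by multiplying out the blocks.
\begin{itemize}
\item The top-left block is $M_1-M_2M_4^{-1}M_3+M_2M_4^{-1}M_3=M_1$.
\item The top-right block is $M_2M_4^{-1}M_4=M_2$.
\item The bottom row is $[\,M_3\ \ M_4\,]$.
\end{itemize}
This is routine, and it is the only place where invertibility of $M_4$ enters.

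Next, take determinants and use multiplicativity of the determinant. The left factor is block unitriangular, so its determinant is $1$. The right factor is block lower triangular with square diagonal blocks of sizes $p$ and $q$, so its determinant is $\det(M_1-M_2M_4^{-1}M_3)\det(M_4)$. Multiplying these gives the claimed formula.

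The only step that needs justification beyond matrix multiplication is the fact that a block triangular matrix with square diagonal blocks has determinant equal to the product of the determinants of those blocks. I would either cite it as standard or justify it quickly. One way is Laplace expansion along the first $p$ rows, where only the leading $p\times p$ minor survives because of the zero block. Another is to factor
\[
\begin{bmatrix} X & 0\\ Y & Z\end{bmatrix}
=
\begin{bmatrix} X & 0\\ 0 & I_q\end{bmatrix}
\begin{bmatrix} I_p & 0\\ Y & Z\end{bmatrix},
\]
and then expand each factor along the identity rows or columns. I do not expect any real obstacle here.
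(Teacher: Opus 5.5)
Your proof is correct: the factorization checks out block by block, the left factor is unit upper triangular with determinant $1$, and the right factor is block lower triangular with determinant $\det(M_1-M_2M_4^{-1}M_3)\det(M_4)$. The paper does not prove this lemma; it only cites Meyer's textbook, where the identity is obtained by essentially the same block-triangular (Schur complement) factorization together with the product rule for determinants.
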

 
\begin{lemma}{\rm\cite{meyer2023matrix}}\label[lemma]{sylv}
Let $C$ be an $n\times n$ invertible matrix and $\alpha\in\mathbb{R}$. Then 
$\displaystyle\det(C+\alpha J_n)=\det(C)\,(1+\alpha\,\textnormal{\textbf{1}}_n^TC^{-1}\textnormal{\textbf{1}}_n)$.
\end{lemma}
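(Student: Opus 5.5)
The statement to prove is \cref{sylv}: for invertible $C\in\mathbb{R}^{n\times n}$ and $\alpha\in\mathbb{R}$, $\det(C+\alpha J_n)=\det(C)\bigl(1+\alpha\,\mathbf 1_n^{\mathsf T}C^{-1}\mathbf 1_n\bigr)$. The plan is to reduce it directly to the matrix determinant lemma, \cref{lem:det-lemma}, which is already stated in the paper.

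First, write $J_n$ as a rank-one product, $J_n=\mathbf 1_n\mathbf 1_n^{\mathsf T}$. Then
\[
C+\alpha J_n \;=\; C+UV, \qquad U\coloneqq \alpha\mathbf 1_n\in\mathbb{R}^{n\times 1},\quad V\coloneqq \mathbf 1_n^{\mathsf T}\in\mathbb{R}^{1\times n}.
\]

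Next, apply \cref{lem:det-lemma} with $X=C$ (invertible by hypothesis) and $r=1$. This gives
\[
\det(C+\alpha J_n)=\det(C)\det\bigl(I_1+\mathbf 1_n^{\mathsf T}C^{-1}(\alpha\mathbf 1_n)\bigr).
\]
The matrix inside the second determinant is the $1\times 1$ matrix $1+\alpha\,\mathbf 1_n^{\mathsf T}C^{-1}\mathbf 1_n$, so its determinant is that scalar, and the identity follows.

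There is no real obstacle here. The only point needing care is that \cref{lem:det-lemma} requires $X$ to be invertible, and this is exactly the hypothesis on $C$.

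If one wants a self-contained argument instead, \cref{lem:det-lemma} itself follows from block elimination. One computes the determinant of
\[
\begin{bmatrix} X & -U\\ V & I_r\end{bmatrix}
\]
in two ways. Eliminating the lower-left block (or using \cref{schur} with $M_4=I_r$) gives $\det(X+UV)$. Eliminating the upper-right block using $X^{-1}$ gives $\det(X)\det(I_r+VX^{-1}U)$. Comparing the two expressions yields the lemma.
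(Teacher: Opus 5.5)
Your proof is correct. The paper gives no proof of this lemma; it simply cites Meyer. So there is no competing argument to compare against.

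Your key observation is that the statement is exactly the $r=1$ case of \cref{lem:det-lemma}, with $X=C$, $U=\alpha\mathbf{1}_n$ and $V=\mathbf{1}_n^{\mathsf T}$. This makes the separate citation redundant, since the paper already quotes the general matrix determinant lemma. The invertibility hypothesis on $C$ is exactly what that lemma needs.

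Your self-contained block-elimination sketch is also sound, with one small caveat. \cref{schur} as stated only covers the expansion with the lower-right block $M_4$ invertible. For the second expansion, through $X^{-1}$, you need one of two things:
\begin{itemize}
\item the companion Schur formula with the upper-left block invertible; or
\item a preliminary step that swaps the two block rows and the two block columns simultaneously. This is a conjugation by a permutation matrix and leaves the determinant unchanged. Afterwards $X$ sits in the $M_4$ position, and \cref{schur} gives $\det(X)\det(I_r+VX^{-1}U)$ directly.
\end{itemize}
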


The following lemma is immediate.

\begin{lemma}\label{lem:block}
Let $a,b,c\in\mathbb{R}$, $A$ be an $n_1\times n_1$ matrix and $B$ an $n_2\times n_2$ matrix.
Let
\[
M=
\begin{bmatrix}
A+aI_{n_1} & cJ_{n_1,n_2}\\
cJ_{n_2,n_1} & B+bI_{n_2}
\end{bmatrix}.
\]
Then
$\phi_M(x)= \phi_A(x-a)\,\phi_B(x-b)\, \Big(1-c^2\,\chi_A(x-a)\,\chi_B(x-b)\Big)$.
\end{lemma}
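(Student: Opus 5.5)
We must prove \cref{lem:block}: $\phi_M(x)=\phi_A(x-a)\,\phi_B(x-b)\bigl(1-c^2\chi_A(x-a)\chi_B(x-b)\bigr)$. The plan is to apply the Schur complement formula of \cref{schur} to $xI-M$, then simplify the resulting rank-one correction with \cref{sylv}. Both sides are rational functions of $x$ and the left side is a polynomial. It therefore suffices to prove the identity for all $x$ outside the finite set where $x-a\in\spec(A)$ or $x-b\in\spec(B)$. On that set the identity extends, with the right-hand side read as a polynomial after clearing denominators.

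Fix such an $x$ and put $P\coloneqq (x-a)I_{n_1}-A$ and $Q\coloneqq (x-b)I_{n_2}-B$. Both are invertible, with $\det P=\phi_A(x-a)$ and $\det Q=\phi_B(x-b)$. Then
\[
xI-M=\begin{bmatrix} P & -cJ_{n_1,n_2}\\ -cJ_{n_2,n_1} & Q\end{bmatrix}.
\]
By \cref{schur} with $M_4=Q$, we get $\det(xI-M)=\det(Q)\det\bigl(P-c^2J_{n_1,n_2}Q^{-1}J_{n_2,n_1}\bigr)$.

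The key observation is that $J_{n_1,n_2}=\mathbf 1_{n_1}\mathbf 1_{n_2}^{\mathsf T}$. Hence
\[
J_{n_1,n_2}Q^{-1}J_{n_2,n_1}=\mathbf 1_{n_1}\bigl(\mathbf 1_{n_2}^{\mathsf T}Q^{-1}\mathbf 1_{n_2}\bigr)\mathbf 1_{n_1}^{\mathsf T}=\chi_B(x-b)\,J_{n_1},
\]
using the definition of the coronal, $\chi_B(y)=\mathbf 1^{\mathsf T}(yI-B)^{-1}\mathbf 1$. So the Schur complement is $P+\alpha J_{n_1}$ with the scalar $\alpha=-c^2\chi_B(x-b)$.

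Applying \cref{sylv} gives
\[
\det(P+\alpha J_{n_1})=\det(P)\bigl(1+\alpha\,\mathbf 1^{\mathsf T}P^{-1}\mathbf 1\bigr)=\phi_A(x-a)\bigl(1-c^2\chi_B(x-b)\chi_A(x-a)\bigr).
\]
Multiplying by $\det Q=\phi_B(x-b)$ yields the claim.

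The only delicate point is that \cref{sylv} is stated for real $\alpha$, whereas here $\alpha$ depends on $x$. This is harmless: the matrix determinant lemma (\cref{lem:det-lemma} with $r=1$) holds over any field, including $\mathbb C$ or the rational function field. The computation is otherwise routine, so no step is a real obstacle.
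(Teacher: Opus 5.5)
Your proof is correct and follows the paper's argument exactly: you apply \cref{schur} to $xI-M$ with the $(x-b)I_{n_2}-B$ block as pivot, use the rank-one identity $J_{n_1,n_2}\bigl((x-b)I_{n_2}-B\bigr)^{-1}J_{n_2,n_1}=\chi_B(x-b)\,J_{n_1}$, and finish with \cref{sylv}. The one addition is that you explicitly restrict to $x$ with $x-a\notin\spec(A)$ and $x-b\notin\spec(B)$ and then extend as an identity of rational functions, which the paper leaves implicit.
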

 
\begin{proof}
By \cref{schur},
\begin{align*}
\phi_M(x)
&=
\det\begin{bmatrix}
(x-a)I_{n_1}-A & -cJ_{n_1,n_2}\\
-cJ_{n_2,n_1} & (x-b)I_{n_2}-B
\end{bmatrix} \\
&=
\phi_B(x-b)\,
\det\!\Big((x-a)I_{n_1}-A - c^2J_{n_1,n_2}((x-b)I_{n_2}-B)^{-1}J_{n_2,n_1}\Big) \\
&=
\phi_B(x-b)\,
\det\!\Big((x-a)I_{n_1}-A - c^2\chi_B(x-b)\,J_{n_1}\Big) \\
&=
\phi_B(x-b)\,\phi_A(x-a)\,
\Big(1-c^2\chi_A(x-a)\chi_B(x-b)\Big),
\end{align*}
where we used
\[
J_{n_1,n_2}((x-b)I_{n_2}-B)^{-1}J_{n_2,n_1}
=
\chi_B(x-b)\,J_{n_1}
\]
and \cref{sylv}.
\end{proof}


\begin{theorem}\label{lem:join-poly-variants}
Let $D=D_1\triangledown D_2$, where $D_1$ and $D_2$ are simple digraphs on $n_1\geq 1$ and $n_2\geq 1$ vertices, respectively, and let $n=n_1+n_2$. Then  
\begin{enumerate}
\item {\rm \cite{LiSu}} 
$\phi_{\LLL(D)}(x)=\frac{x(x-n)}{(x-n_1)(x-n_2)}\,\phi_{\LLL(D_1)}(x-n_2)\,\phi_{\LLL(D_2)}(x-n_1)$.
\item $\phi_{\mathcal L_{\mathrm{inc}}(D)}(x)=
\frac{x(x-2n)}{(x-2n_1)(x-2n_2)}\,
\phi_{\mathcal L_{\mathrm{inc}}(D_1)}(x-2n_2)\,
\phi_{\mathcal L_{\mathrm{inc}}(D_2)}(x-2n_1).$
\item $\phi_{\SSS(D)}(x)=\phi_{\SSS(D_1)}(x-n_2)\,\phi_{\SSS(D_2)}(x-n_1)\left(
1-\chi_{\SSS(D_1)}(x-n_2)\,\chi_{\SSS(D_2)}(x-n_1)
\right)$.
\item $\phi_{\KKK(D)}(x)=\phi_{\KKK(D_1)}(x)\,\phi_{\KKK(D_2)}(x)$.
\end{enumerate}
\end{theorem}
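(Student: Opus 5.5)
The plan is to write each Laplacian of the join in the block form of \cref{lem:block} and then read off the characteristic polynomial. Order the vertices of $D$ with $V_1$ first. The adjacency matrix is
\[
A(D)=\begin{bmatrix} A(D_1) & J_{n_1,n_2}\\ J_{n_2,n_1} & A(D_2)\end{bmatrix},
\]
and the out-degrees satisfy $D_{\mathrm{out}}(D)=\mathrm{diag}\bigl(D_{\mathrm{out}}(D_1)+n_2I,\ D_{\mathrm{out}}(D_2)+n_1I\bigr)$. The in-degrees behave the same way. This gives the following block forms:
\[
\LLL(D)=\begin{bmatrix} \LLL(D_1)+n_2I & -J\\ -J & \LLL(D_2)+n_1I\end{bmatrix},\qquad
\SSS(D)=\begin{bmatrix} \SSS(D_1)+n_2I & -J\\ -J & \SSS(D_2)+n_1I\end{bmatrix}.
\]
The incidence Laplacian has the analogous form, with $\mathcal L_{\mathrm{inc}}(D_i)$, shifts $2n_2$ and $2n_1$, and off-diagonal blocks $-2J$; this follows from \cref{lem:forget}. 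The skew part is block diagonal,
\[
\KKK(D)=\mathrm{diag}(\KKK(D_1),\KKK(D_2)),
\]
because the added $J$ blocks are symmetric and cancel in $\tfrac12(A^{\mathsf T}-A)$ by \cref{lem:prim_lap}. Item 4 follows at once.

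For item 3, apply \cref{lem:block} with $a=n_2$, $b=n_1$, $c=-1$. Then $c^2=1$, which is exactly the stated formula.

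For items 1 and 2, \cref{lem:block} (with $c=-1$, resp.\ $c=-2$) again gives
\[
\phi(x)=\phi_1(x-a)\,\phi_2(x-b)\,\bigl(1-c^2\chi_1(x-a)\chi_2(x-b)\bigr),
\]
so it remains to evaluate the coronals. For $\mathcal L_{\mathrm{inc}}$, each $\mathcal L_{\mathrm{inc}}(D_i)$ has row sum $0$, so \cite[Proposition 2]{cui2012spectrum} gives $\chi_{\mathcal L_{\mathrm{inc}}(D_i)}(y)=n_i/y$. The bracket becomes
\[
1-\frac{4n_1n_2}{(x-2n_2)(x-2n_1)}=\frac{x^2-2nx}{(x-2n_1)(x-2n_2)},
\]
since $(x-2n_1)(x-2n_2)-4n_1n_2=x^2-2nx$. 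This is item 2.

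For $\LLL$, only the row sums matter: $\LLL(D_i)\mathbf 1=0$, so $(yI-\LLL(D_i))^{-1}\mathbf 1=\mathbf 1/y$. The coronal $\chi(y)=\mathbf 1^{\mathsf T}(yI-M)^{-1}\mathbf 1$ only needs the right action on $\mathbf 1$, so this again gives $n_i/y$ even though $\LLL(D_i)$ is not symmetric. The same algebra then gives
\[
1-\frac{n_1n_2}{(x-n_2)(x-n_1)}=\frac{x(x-n)}{(x-n_1)(x-n_2)},
\]
which is item 1.

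The only delicate point is that \cref{lem:block} uses inverses, through \cref{schur} and \cref{sylv}. The identities should therefore first be proved for $x$ outside the finitely many eigenvalues of the diagonal blocks (and away from $n_1,n_2$, resp.\ $2n_1,2n_2$). They then extend to all $x$ as identities of rational functions, whose cleared-denominator forms are polynomial identities.
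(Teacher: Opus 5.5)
Your proposal is correct and follows the paper's proof. You write each Laplacian of the join in the block form of \cref{lem:block}: shifts $n_2,n_1$ with off-diagonal blocks $-J$ for $\LLL$ and $\SSS$, shifts $2n_2,2n_1$ with $-2J$ for $\mathcal L_{\mathrm{inc}}$, and a block-diagonal $\KKK$. You then use $\chi_M(y)=n_i/y$ for the zero-row-sum blocks. The only difference is that you also prove item 1 this way, correctly noting that $\chi_M$ depends only on $(yI-M)^{-1}\mathbf 1$, so symmetry of $\LLL(D_i)$ is not needed; the paper simply cites item 1.
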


\begin{proof}
Since the join adds all arcs in both directions between $V(D_1)$ and $V(D_2)$, with respect to the decomposition $V(D)=V(D_1)\sqcup V(D_2)$, we have
$D_{\mathrm{out}}(D)=(D_{\mathrm{out}}(D_1)+n_2I_{n_1})\oplus(D_{\mathrm{out}}(D_2)+n_1I_{n_2})$, and
$D_{\mathrm{in}}(D)=(D_{\mathrm{in}}(D_1)+n_2I_{n_1})\oplus(D_{\mathrm{in}}(D_2)+n_1I_{n_2})$, and
\[
A(D)=
\begin{bmatrix}
A(D_1) & J_{n_1,n_2}\\
J_{n_2,n_1} & A(D_2)
\end{bmatrix},
\]
where $M_1\oplus M_2$ denotes the direct sum of matrices $M_1$ and $M_2$.
By \cref{lem:forget}, 
\[
\mathcal L_{\mathrm{inc}}(D)
=\begin{bmatrix}
\mathcal L_{\mathrm{inc}}(D_1)+2n_2I_{n_1} & -2J_{n_1,n_2}\\
-2J_{n_2,n_1} & \mathcal L_{\mathrm{inc}}(D_2)+2n_1I_{n_2}
\end{bmatrix}.
\]
Since $\mathcal{L}_{\mathrm{inc}}$ has constant row sum zero,
the second equation follows from \cref{lem:block} using
$\chi_{\mathcal{L}_{\mathrm{inc}}(D_1)}(x)=n_1/x$ 
and 
$\chi_{\mathcal{L}_{\mathrm{inc}}(D_2)}(x)=n_2/x$.

Finally, \cref{lem:prim_lap} gives
\[
\SSS(D)
=
\begin{bmatrix}
\SSS(D_1)+n_2I_{n_1} & -J_{n_1,n_2}\\
-J_{n_2,n_1} & \SSS(D_2)+n_1I_{n_2}
\end{bmatrix},
\quad
\KKK(D)
=\begin{bmatrix}
\KKK(D_1) & O\\
O & \KKK(D_2)
\end{bmatrix},
\] 
and the third and fourth equations follow by \cref{lem:block}.
\end{proof}

Recall from earlier that if $D$ is Eulerian, then $\chi_{\mathcal{S}_{\mathrm{out}}(D)}(x)=n/x$ giving the following simplification for $\phi_{\SSS(D_1\triangledown D_2)}(x)$ for Eulerian digraphs.

\begin{corollary}
Let $D=D_1\triangledown D_2$, where $D_1$ and $D_2$ are simple digraphs on $n_1\geq 1$ and $n_2\geq 1$ vertices, respectively, and let $n=n_1+n_2$. 
If $D_1$ and $D_2$ are Eulerian,
then $D$ is Eulerian and
\[
\phi_{\SSS(D)}(x)=\frac{x(x-n)}{(x-n_1)(x-n_2)}\,\phi_{\SSS(D_1)}(x-n_2)\,\phi_{\SSS(D_2)}(x-n_1).
\]
\end{corollary}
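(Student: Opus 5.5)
Two things need proving: that $D$ is Eulerian, and the displayed formula.

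\emph{$D$ is Eulerian.} The proof of \cref{lem:join-poly-variants} records the degree matrices of the join:
\[
D_{\mathrm{out}}(D)=(D_{\mathrm{out}}(D_1)+n_2I_{n_1})\oplus(D_{\mathrm{out}}(D_2)+n_1I_{n_2}),
\]
and the analogous identity for $D_{\mathrm{in}}(D)$. Since $D_{\mathrm{in}}(D_i)=D_{\mathrm{out}}(D_i)$ for $i=1,2$, these two block-diagonal matrices coincide, so $D$ is Eulerian.

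\emph{The formula.} I will start from part~3 of \cref{lem:join-poly-variants}:
\[
\phi_{\SSS(D)}(x)=\phi_{\SSS(D_1)}(x-n_2)\,\phi_{\SSS(D_2)}(x-n_1)\Bigl(1-\chi_{\SSS(D_1)}(x-n_2)\,\chi_{\SSS(D_2)}(x-n_1)\Bigr).
\]
For an Eulerian $D_i$, \cref{lem:prim_lap} gives $\SSS(D_i)=D_{\mathrm{out}}(D_i)-\tfrac12(A+A^{\mathsf T})$. Its row sums are
\[
d_j^+-\tfrac12(d_j^++d_j^-)=0,
\]
so $\SSS(D_i)$ has constant row sum $0$. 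By \cite[Proposition 2]{cui2012spectrum}, this gives $\chi_{\SSS(D_i)}(y)=n_i/y$. Substituting, the bracket becomes
\[
1-\frac{n_1n_2}{(x-n_2)(x-n_1)}=\frac{x^2-(n_1+n_2)x}{(x-n_1)(x-n_2)}=\frac{x(x-n)}{(x-n_1)(x-n_2)},
\]
which is the stated factor.

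The only delicate point is that the rational identity is derived for $x$ avoiding $n_1$, $n_2$ and the eigenvalues of the shifted blocks, where the resolvents exist. Both sides are rational functions agreeing on a cofinite set, so they agree identically. There is no real obstacle beyond this routine justification.
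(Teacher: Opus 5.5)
Your proof is correct and follows essentially the paper's route. Like the paper, you specialize part~3 of \cref{lem:join-poly-variants} using $\chi_{\SSS(D_i)}(y)=n_i/y$, which holds because $\SSS(D_i)$ has constant row sum $0$ when $D_i$ is Eulerian. Your check that $D$ is Eulerian, read off from the block-diagonal degree matrices of the join, supplies a step the paper leaves implicit.
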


\begin{corollary}\label{cor:joinspec}
Let $D=D_1\triangledown D_2$, where $D_1$ and $D_2$ are simple digraphs on $n_1\geq 1$ and $n_2\geq 1$ vertices, respectively, and let $n\coloneqq n_1+n_2$
and $\sm_1\{0\}$ denote deleting exactly one copy of $0$ from the multiset.
Then the following hold:
\begin{enumerate}


\item 
$\spec\!\big({\mathcal L}_{\mathrm{out}}(D)\big)
=\{0,n\}
\;\sqcup\;
\left(\big(\spec({\mathcal L}_{\mathrm{out}}(D_1))\sm_1\{0\}\big)+n_2\right)
\;\sqcup\;
\left(\big(\spec({\mathcal L}_{\mathrm{out}}(D_2))\sm_1\{0\}\big)+n_1\right).
$

\item 
$\spec\!\big(\mathcal L_{\mathrm{inc}}(D)\big)
=
\{0,2n\}
\;\sqcup\;
\left(\Big(\spec\!\big(\mathcal L_{\mathrm{inc}}(D_1)\big)\sm_1\{0\}\Big)+2n_2\right)
\;\sqcup\;
\left(\Big(\spec\!\big(\mathcal L_{\mathrm{inc}}(D_2)\big)\sm_1\{0\}\Big)+2n_1\right)$.

\item If $D_1$ and $D_2$ are Eulerian digraphs, then\\
$
\spec\big(\SSS(D)\big)
=
\{0,n\}
\;\sqcup\;
\left(\big(\spec(\SSS(D_1))\sm_1\{0\}\big)+n_2\right)
\;\sqcup\;
\left(\big(\spec(\SSS(D_2))\sm_1\{0\}\big)+n_1\right).
$
\item $\spec\big(\KKK(D)\big)
=\spec\big(\KKK(D_1)\big)\;\sqcup\;\spec\big(\KKK(D_2)\big)$.
\end{enumerate}
\end{corollary}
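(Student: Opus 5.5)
We derive \cref{cor:joinspec} directly from the characteristic-polynomial identities of \cref{lem:join-poly-variants}. For item (1), we start from
\[
\phi_{\LLL(D)}(x)=\frac{x(x-n)}{(x-n_1)(x-n_2)}\,\phi_{\LLL(D_1)}(x-n_2)\,\phi_{\LLL(D_2)}(x-n_1).
\]
Since $0$ is always an eigenvalue of $\LLL(D_1)$, we may write $\phi_{\LLL(D_1)}(y)=y\,\psi_1(y)$, where $\psi_1$ is the polynomial whose roots are $\spec(\LLL(D_1))\sm_1\{0\}$. Then $\phi_{\LLL(D_1)}(x-n_2)=(x-n_2)\psi_1(x-n_2)$. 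Similarly, $\phi_{\LLL(D_2)}(x-n_1)=(x-n_1)\psi_2(x-n_1)$. The denominator $(x-n_1)(x-n_2)$ then cancels exactly, leaving
\[
\phi_{\LLL(D)}(x)=x(x-n)\,\psi_1(x-n_2)\,\psi_2(x-n_1).
\]
The roots of $\psi_1(x-n_2)$ are the multiset $(\spec(\LLL(D_1))\sm_1\{0\})+n_2$, and analogously for $\psi_2$. Reading off roots with multiplicity gives (1). Item (2) is the same computation applied to part (2) of \cref{lem:join-poly-variants}, using that $0\in\spec(\mathcal L_{\mathrm{inc}}(D_i))$ because the row sums vanish; the shifts are $2n_2$ and $2n_1$, and the extra roots are $0$ and $2n$.

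For item (3), we first note that the join of Eulerian digraphs is Eulerian: each vertex of $D_1$ gains exactly $n_2$ out-arcs and $n_2$ in-arcs, and similarly for $D_2$. We then use the Eulerian corollary following \cref{lem:join-poly-variants}, which has the same shape as item (1) with $\SSS$ in place of $\LLL$. To factor out the root $0$ in the same way, we need $0\in\spec(\SSS(D_i))$. This holds because, for Eulerian $D_i$, \cref{cor:Eulerian} gives $\SSS(D_i)=\tfrac12\mathcal L_{\mathrm{inc}}(D_i)$, which has $\mathbf 1$ in its kernel. The cancellation then proceeds exactly as before.

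Item (4) is immediate: $\phi_{\KKK(D)}=\phi_{\KKK(D_1)}\phi_{\KKK(D_2)}$ means the roots are the disjoint union of the two multisets. Equivalently, $\KKK(D)$ is block diagonal, since the join arcs form $2$-cycles and cancel in $\tfrac12(A^{\mathsf T}-A)$.

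There is no substantive obstacle. The only point requiring care is that the identities in \cref{lem:join-poly-variants} are rational expressions, so we must verify that the denominators cancel as polynomial factors, not merely away from $x=n_1,n_2$. This is exactly what the factorization $\phi(y)=y\,\psi(y)$ guarantees. After cancellation both sides are polynomials that agree at all but finitely many points, so they agree identically, and multiplicities may be read off from the roots, including coincidences such as $n_1=n_2$ or a shifted eigenvalue equal to $0$ or $n$.
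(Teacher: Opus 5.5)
Your proposal is correct and follows the route the paper intends: the corollary is read off from the characteristic-polynomial factorizations in \cref{lem:join-poly-variants} and the subsequent Eulerian corollary. In each case the factor $(x-n_1)(x-n_2)$ (resp.\ $(x-2n_1)(x-2n_2)$) cancels against the guaranteed zero eigenvalues of the constituent Laplacians. Your explicit factorization $\phi(y)=y\,\psi(y)$ and your check that $0\in\spec(\SSS(D_i))$ for Eulerian $D_i$ supply exactly the details the paper leaves implicit.
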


\subsection{Spectral radius of $\LLL$}\label{sec:specradius}
For undirected graphs, the
Laplacian spectral radius is bounded above by number of vertices, and this suggests asking whether an analogous bound holds for the possibly complex eigenvalues of $\LLL(D)$.  
Although $\LLL(D)$ need not be symmetric and its
eigenvalues need not be real, 
the same bound on its spectral radius does indeed hold.




\begin{theorem}\label{thm:Lout-bound}
Let $D$ be a simple digraph of order $n\geq 2$. 
If $\mu\in\spec(\LLL(D))$ then $|\mu|\leq n$, and equality occurs only when $\mu=n$.
\end{theorem}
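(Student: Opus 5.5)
Our plan is to localize eigenvalues with Gershgorin discs and then use the complement identity to handle the boundary case. Since $D$ is simple, row $i$ of $\LLL(D)$ has diagonal entry $d_i^+\le n-1$ and off-diagonal absolute row sum exactly $d_i^+$. Gershgorin therefore places every eigenvalue $\mu$ in some disc $|z-d_i^+|\le d_i^+$, so $|\mu|\le 2d_i^+\le 2(n-1)$. This is too weak on its own, so we pass to the complement. Since $\overline D$ is also simple, every eigenvalue $\nu$ of $\LLL(\overline D)$ lies in a disc $|z-\bar d_i^+|\le \bar d_i^+$ with $\bar d_i^+\le n-1$. Such a disc lies in the closed right half-plane and touches the imaginary axis only at $0$. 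Hence $\operatorname{Re}\nu\ge 0$, with $\operatorname{Re}\nu=0$ only when $\nu=0$.

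Next we transfer this information back to $D$ via \Cref{cor:compeig}(1). It says that every eigenvalue $\mu$ of $\LLL(D)$, apart from one copy of $0$, has the form $\mu=n-\nu$ with $\nu\in\spec(\LLL(\overline D))$. Combining the two containments, $\mu$ lies in a Gershgorin disc of $\LLL(D)$, so $\operatorname{Re}\mu\ge 0$, and $\mu=n-\nu$ with $\operatorname{Re}\nu\ge0$, so $\operatorname{Re}\mu\le n$. This alone does not bound $|\mu|$. The sharper observation is that $\mu$ lies in a disc centred at $d_i^+$ of radius $d_i^+$, and $n-\mu$ lies in a disc centred at $\bar d_j^+$ of radius $\bar d_j^+$. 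Equivalently, $\mu$ lies in a disc $|z-c|\le n-c$ with $c=n-\bar d_j^+\ge 1$. So $\mu\in\{|z-a|\le a\}\cap\{|z-c|\le n-c\}$, where $0\le a\le n-1$ and $1\le c\le n$.

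The key step is to show that $|z|\le n$ on this lens-shaped intersection, with equality only at $z=n$. A point in the second disc satisfies $|z|\le c+(n-c)=n$ by the triangle inequality, with equality only if $z=n$, since $z$ must then be the point of the circle farthest from $0$ along the positive real axis, namely $c+(n-c)=n$. This bound holds because $c\ge 0$. The first disc is in fact unnecessary, and so is the spare eigenvalue $0$, which trivially satisfies $|0|<n$. The whole argument therefore reduces to \Cref{cor:compeig} together with Gershgorin applied to $\overline D$.

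The main obstacle is the precise multiset bookkeeping in \Cref{cor:compeig}. We must check that every eigenvalue of $\LLL(D)$ other than one copy of $0$ is indeed $n-\nu$ for some $\nu\in\spec(\LLL(\overline D))$. Since $\phi_{\LLL(\overline D)}(x)=(-1)^n\frac{x}{x-n}\phi_{\LLL(D)}(n-x)$, the roots of $\phi_{\LLL(D)}$ other than one $0$ correspond bijectively to roots $\nu=n-\mu$ of $\phi_{\LLL(\overline D)}$. When $\mu=n$, we get $\nu=0$, which is allowed. With this verified, the equality case $|\mu|=n$ forces $\mu=n$, as the statement requires.
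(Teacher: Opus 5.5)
Your proof is correct and follows essentially the paper's route: Gershgorin applied to $\LLL(\overline D)$, transferred back to $D$ through the complement relation $\mu\mapsto n-\mu$ of \Cref{cor:compeig}, then the triangle inequality together with its equality case. The differences are cosmetic. The paper first enlarges every Gershgorin disc of a simple digraph to the single disc $|z-(n-1)|\le n-1$, which yields $|\mu-1|\le n-1$. You instead keep the individual disc $|z-c|\le n-c$ with $c\ge 1$, and you explicitly set aside the spare copy of $0$, which the paper's proof passes over silently.
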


\begin{proof}
Let $\mu\in\spec(\mathcal L_{\mathrm{out}}(D))$. 
We claim that $\mu\in\{z\in\mathbb C:|z-(n-1)|\leq n-1\}$.
If $\mu=0$ the claim is immediate, so assume $\mu\neq 0$.
By Gershgorin's circle theorem, $\mu$ lies in at least one disc
\[
\{z\in\mathbb C:|z-d_i^+|\leq d_i^+\},
\]
for some $i=1,2,\ldots, n$.
Since $0\leq d_i^+\leq n-1$ for simple digraphs, we have
\[
|\mu-(n-1)|
\leq |\mu-d_i^+|+|d_i^+-(n-1)|
\leq d_i^+ + (n-1-d_i^+)
= n-1,
\]
and hence, $\mu\in\{z\in\mathbb C:|z-(n-1)|\leq n-1\}$ as required.

By \cref{cor:compeig}, $n-\mu$ is an eigenvalue of $\overline D$.
Since $\overline D$ is also a simple digraph, it follows from the claim that 
$n-\mu\in\{z\in\mathbb C:|z-(n-1)|\leq n-1\}$.
Hence $|(n-\mu)-(n-1)|\leq n-1$ giving $|\mu-1|\leq n-1$.
By the triangle-inequality, we have
\[
|\mu|\leq|\mu-1|+1\leq n,
\] 
as required.
Recall that for $z_1,z_2\neq 0$, equality occurs in the triangle inequality $|z_1+z_2|\leq|z_1|+|z_2|$ if and only if $z_1=tz_2$ for some $t\geq 0$.
Thus, in the case that $|\mu|=n$ we must have $\mu\in\mathbb{R}$, hence, $\mu=n$.
\end{proof}

\begin{remark}\rm
We note that if $D$ is an oriented graph, then the stronger bound 
$|\mu|\le n-1$ follows immediately from Gershgorin's theorem applied to
$\mathcal L_{\mathrm{out}}(D)^{\mathsf T}$. Indeed, for some vertex $v_i$,
\[
    |\mu-d_i^+|\le d_i^-,
\]
and therefore
\[
    |\mu|\le d_i^+ + d_i^- \le n-1.
\]
This stronger bound is false for general simple digraphs, since $2$-cycles may occur. In fact, equality $|\mu|=n$ is possible in that setting as demonstrated below.
\end{remark}

In the undirected setting where $G$ is a graph of order $n$, one has $n \in \spec(L(G))$ if and only if $G$ is the join of two graphs. This follows from the fact that the Laplacian spectral radius satisfies $\rho(L(G)) \le n$, with equality if and only if $\overline{G}$ is disconnected. Moreover, $\overline{G}$ is disconnected precisely when $G$ can be expressed as the join of two graphs.

In the digraph setting, if $D$ is a digraph on $n$ vertices and is the join of two digraphs, then \cref{cor:joinspec} shows that $n$ is an eigenvalue of $\LLL(D)$.
However, the converse of this statement is false.
For example, the digraph with the following out-degree Laplacian is not a join, but has $n=4$ as an eigenvalue:
\[
L=\begin{bmatrix}
 1 & -1 & 0 & 0\\ -1 & 3 & -1 & -1\\ 0 & 0 & 1 & -1\\ -1 & -1 & -1 & 3 \end{bmatrix}.
\]
This shows that the undirected join characterization does not extend verbatim to digraphs. For simple digraphs, the corresponding characterization is instead captured by the reaches of the complement.


\begin{proposition}\label{prop:n-eigenvalue-coreaches}
Let $D$ be a simple digraph of order $n$ and suppose $\overline D$ has $k\geq 1$ co-reaches.
Then the algebraic and geometric multiplicity of the eigenvalue $n$ of $\LLL(D)$ is equal to $k-1$.
\end{proposition}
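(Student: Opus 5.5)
The plan is to use the complement identity from \cref{lem:comp-all}, $\LLL(D)+\LLL(\overline D)=nI-J$. Write $M\coloneqq\LLL(\overline D)$, so that $nI-\LLL(D)=M+J$. The algebraic (resp.\ geometric) multiplicity of $n$ as an eigenvalue of $\LLL(D)$ is then the algebraic multiplicity (resp.\ nullity) of $0$ for $M+J$. By \cref{cor:zero-mult-coreaches}, the eigenvalue $0$ of $M$ has algebraic and geometric multiplicity $k$, the number of co-reaches of $\overline D$. So the task is to show that adding $J$ lowers both multiplicities by exactly one.

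For the algebraic multiplicity, use \cref{lem:comp-poly-variants}(1), which holds for any simple digraph. It gives
\[
\phi_{\LLL(D)}(x)=(-1)^n\frac{x}{x-n}\,\phi_{\LLL(\overline D)}(n-x).
\]
The factor $x^k$ of $\phi_{\LLL(\overline D)}$ becomes $(n-x)^k$, and the factor $\frac{1}{x-n}$ removes one copy. This leaves $(x-n)^{k-1}$ exactly, since the remaining factors do not vanish at $x=n$. Hence the algebraic multiplicity is $k-1$. Since $k\ge 1$, this division is legitimate.

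For the geometric multiplicity, I would work directly with $\ker(M+J)$. Since $M\mathbf 1=0$ (constant row sum zero), we have $\mathbf 1^{\mathsf T}(M+J)=\mathbf 1^{\mathsf T}M+n\mathbf 1^{\mathsf T}$. Instead, I will use left vectors and rank. The rank-one perturbation $J=\mathbf 1\mathbf 1^{\mathsf T}$ changes rank by at most one, so $\dim\ker(M+J)\ge k-1$. For the upper bound, note that $\dim\ker(M+J)$ is at most the algebraic multiplicity $k-1$ just computed. This gives equality. Alternatively, and more concretely: if $(M+J)v=0$, then $Mv=-(\mathbf 1^{\mathsf T}v)\mathbf 1$. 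Applying a left null vector $w\ge0$ of $M$ that is supported on a sink SCC and has positive sum gives $\mathbf 1^{\mathsf T}v=0$, so $v\in\ker M\cap\mathbf 1^\perp$, which has dimension $k-1$ because $\mathbf 1\in\ker M$. Such a $w$ is available from the reach theory in \cite{MR2223513}, via the stationary vectors of $\mathcal L_{\mathrm{in}}(\overleftarrow{\overline D})$.

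The main obstacle is the geometric part, but the sandwich argument avoids it. The lower bound $k-1$ comes from the rank-one perturbation, and the upper bound from geometric $\le$ algebraic $=k-1$. I would present that argument and mention the explicit description $\ker(M+J)=\ker M\cap\mathbf 1^\perp$ only as a remark. The only care needed is that when $k=1$ the statement says $n$ is not an eigenvalue, which the polynomial identity already handles.
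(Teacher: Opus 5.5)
Your argument is correct, and the algebraic half is the same as the paper's. Both read off the multiplicity of $n$ from the complement identity in \Cref{lem:comp-poly-variants}, using that $0$ has algebraic multiplicity $k$ for $\LLL(\overline D)$. The geometric half takes a different route. The paper identifies the eigenspace directly: it asserts that $\LLL(D)v=nv$ if and only if $\LLL(\overline D)v=0$ and $v\perp\mathbf 1$. From $\LLL(\overline D)=nI-J-\LLL(D)$ one only gets $\LLL(\overline D)v=-(\mathbf 1^{\mathsf T}v)\mathbf 1$. So that step tacitly needs $\mathbf 1\notin\operatorname{range}\LLL(\overline D)$. This does hold, because $0$ is semisimple: if $Mu=\mathbf 1$, then $M^2u=0$, so $u\in\ker M^2=\ker M$, contradicting $Mu\neq 0$. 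Your nonnegative left null vector supported on a sink SCC also gives it. Your sandwich argument avoids this point entirely and uses nothing beyond what is already established. The lower bound comes from $\operatorname{rank}(M+J)\le\operatorname{rank}(M)+1$, and the upper bound from geometric $\le$ algebraic $=k-1$. It does not by itself describe the eigenspace, but that comes for free. If $Mv=0$ and $\mathbf 1^{\mathsf T}v=0$, then $(M+J)v=0$, so $\ker M\cap\mathbf 1^\perp\subseteq\ker(M+J)$. The left side has dimension $k-1$ because $\mathbf 1\in\ker M\setminus\mathbf 1^\perp$, and the right side has dimension $k-1$ by your sandwich, so the two are equal. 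This inclusion could also replace the rank estimate as your lower bound.
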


\begin{proof}
By \Cref{lem:comp-poly-variants}, if $n$ has algebraic multiplicity $r$ as an eigenvalue of $\LLL(D)$, then $0$ has algebraic multiplicity $r+1$ as an eigenvalue of $\LLL(\overline D)$. By \Cref{cor:zero-mult-coreaches}, the algebraic (and geometric) multiplicity of the eigenvalue $0$ of $\LLL(\overline D)$ equals the number of co-reaches of $\overline D$, namely $k$. Hence $r+1=k$, so the algebraic multiplicity of $n$ for $\LLL(D)$ is $k-1$.

For the geometric multiplicity, note that $\mathbf{1}\in\ker \LLL(\overline D)$, and by \Cref{cor:zero-mult-coreaches} the nullspace of $\LLL(\overline D)$ has dimension $k$. The complement relation $\LLL(\overline D)=nI-J-\LLL(D)$ implies that a vector $v$ satisfies $\LLL(D)v=nv$ if and only if $\LLL(\overline D)v=0$ and $v\perp \mathbf{1}$. Thus the eigenspace of $n$ for $\LLL(D)$ is exactly
$\ker \LLL(\overline D)\cap \mathbf{1}^\perp$,
which has dimension $k-1$. 
\end{proof}

If $D$ is a join, then $\overline{D}$ has at least two sink SCCs.
\Cref{prop:n-eigenvalue-coreaches} shows this property characterizes the digraphs $D$ with $n\in\spec(\LLL(D))$.

\begin{corollary}\label{cor:n-eigenvalue-coreaches}
Let $D$ be a simple digraph of order $n$. 
Then $n\in\spec(\LLL(D))$
if and only if $\overline D$ has at least two co-reaches, equivalently $\overline D$ has at least two sink SCCs.
\end{corollary}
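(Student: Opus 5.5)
This follows directly from \Cref{prop:n-eigenvalue-coreaches} together with \Cref{cor:zero-mult-coreaches}; the plan is to treat the two directions and the ``equivalently'' clause separately.

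\emph{Applying the proposition.} First note that $\overline D$ always has $k\geq 1$ co-reaches. Indeed, $\mathbf 1\in\ker\LLL(\overline D)$ because every row of an out-degree Laplacian sums to zero. So $0$ is an eigenvalue of $\LLL(\overline D)$, and \Cref{cor:zero-mult-coreaches} shows its multiplicity equals the number of co-reaches. Alternatively, the condensation of any finite digraph is acyclic, so it has at least one sink. Hence the hypothesis of \Cref{prop:n-eigenvalue-coreaches} is automatically satisfied.

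\emph{Both directions at once.} By \Cref{prop:n-eigenvalue-coreaches}, the algebraic multiplicity of $n$ as an eigenvalue of $\LLL(D)$ is exactly $k-1$. Therefore $n\in\spec(\LLL(D))$ if and only if $k-1\geq 1$, that is, if and only if $k\geq 2$.

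\emph{The ``equivalently'' clause.} It remains to see that the number of co-reaches of $\overline D$ equals the number of sink SCCs of $\overline D$. This is immediate from \Cref{def:reach}: each co-reach is $R^{-}(C)$ for a sink SCC $C$, and distinct sink SCCs give distinct co-reaches. The latter holds because $R^{-}(C)\cap C'=\emptyset$ for distinct sink SCCs $C\neq C'$, since no path leaves a sink SCC. This identification is also already stated in \Cref{cor:zero-mult-coreaches}.

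I do not expect any real obstacle. The only point needing care is checking that $k\geq1$ always holds, so that the proposition applies unconditionally.
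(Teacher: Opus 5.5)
Your proposal is correct and matches the paper, which gives no separate argument and reads this corollary directly off \Cref{prop:n-eigenvalue-coreaches}: the multiplicity of $n$ in $\spec(\LLL(D))$ is $k-1$, so $n$ is an eigenvalue exactly when $k\geq 2$. Your extra checks, that $k\geq 1$ always holds and that the co-reaches of $\overline D$ correspond bijectively to its sink SCCs, are valid and simply make explicit what the paper leaves implicit.
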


\Cref{cor:n-eigenvalue-coreaches} and \Cref{thm:Lout-bound} immediately give the following characterization regarding the spectral radius of $\LLL$.

\begin{corollary}
Let $D$ be a simple digraph of order $n$
and $\rho(\LLL(D))$ denote the spectral radius of $\LLL(D)$.
Then $\rho(\LLL(D))\leq n$, with equality if and only if $\overline D$ has at least two co-reaches, equivalently $\overline D$ has at least two sink SCCs.    
\end{corollary}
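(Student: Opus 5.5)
The plan is to combine the two earlier results directly. By \cref{thm:Lout-bound}, every eigenvalue $\mu$ of $\LLL(D)$ satisfies $|\mu|\leq n$, so $\rho(\LLL(D))\leq n$ holds for every simple digraph of order $n\geq 2$. The case $n=1$ is trivial: $\LLL(D)=[0]$, so $\rho=0<1$. Moreover $\overline D$ is then a single vertex with exactly one sink SCC, so both sides of the equivalence are false.

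For the equality case, we argue in both directions. First suppose $\rho(\LLL(D))=n$. Then some eigenvalue $\mu$ satisfies $|\mu|=n$. The equality clause of \cref{thm:Lout-bound} forces $\mu=n$, so $n\in\spec(\LLL(D))$. By \cref{cor:n-eigenvalue-coreaches}, $\overline D$ then has at least two co-reaches.

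Conversely, suppose $\overline D$ has at least two co-reaches. By \cref{cor:n-eigenvalue-coreaches}, $n\in\spec(\LLL(D))$, so $\rho(\LLL(D))\geq n$. Together with the upper bound, this gives $\rho(\LLL(D))=n$.

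The equivalence between "at least two co-reaches" and "at least two sink SCCs" is already part of \cref{cor:zero-mult-coreaches} and \cref{cor:n-eigenvalue-coreaches}, since co-reaches correspond bijectively to sink SCCs.

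The only potential obstacle is the step that rules out a non-real or negative eigenvalue of modulus $n$. That step is exactly the equality clause of \cref{thm:Lout-bound}, so it needs no new work here. Beyond that, we only have to make sure the hypothesis $n\geq 2$ of \cref{thm:Lout-bound} is handled, which the separate $n=1$ check above does.
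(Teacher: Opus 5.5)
Your proposal is correct and is the same argument as the paper, which derives the corollary directly from the bound and equality clause ($|\mu|=n$ forces $\mu=n$) of \Cref{thm:Lout-bound} together with \Cref{cor:n-eigenvalue-coreaches}. Your separate check of the case $n=1$, which \Cref{thm:Lout-bound} excludes by hypothesis, is a small but worthwhile addition that the paper leaves implicit.
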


\section{Bipartite digraphs}\label{sec:bipartite}
For bipartite digraphs, the usual sign-change similarity relates Laplacian and signless-Laplacian variants.
Thus known spectral results that hold for the signless-Laplacian of a digraph also hold for the out-degree Laplacian of a bipartite digraph.
We record the forms needed below.
 
\begin{definition}
Let $D$ be a digraph. 
\begin{itemize}
\item The \defin{out-degree signless Laplacian} and \defin{in-degree signless Laplacian} are
\[
Q_{\mathrm{out}}(D)\coloneqq D_{\mathrm{out}}(D)+A(D),\qquad
Q_{\mathrm{in}}(D)\coloneqq D_{\mathrm{in}}(D)+A(D).
\]
\item The \defin{incidence signless Laplacian} is
\[
Q_{\mathrm{inc}}(D)\coloneqq
D_{\mathrm{out}}(D)+D_{\mathrm{in}}(D)+A(D)+A(D)^{\mathsf T}.
\]
\item 
The \defin{symmetrized out-degree signless Laplacian} and
\defin{symmetrized in-degree signless Laplacian} are
\[
\mathcal S_{\mathrm{out}}^{+}(D)\coloneqq D_{\mathrm{out}}(D)+\tfrac12(A+A^{\mathsf T}),\qquad
\mathcal S_{\mathrm{in}}^{+}(D)\coloneqq D_{\mathrm{in}}(D)+\tfrac12(A+A^{\mathsf T}).
\]
\end{itemize}
\end{definition}

\begin{lemma}\label{lem:bipartite-all}
Let $D$ be a bipartite digraph with vertex partition $V=X\sqcup Y$,
$|X|=n_1$, $|Y|=n_2$, and order the vertices so that $X$ comes first.
Let
\[
U\coloneqq \begin{bmatrix} I_{n_1} & 0\\ 0 & -I_{n_2}\end{bmatrix}.
\]
Then the following hold:
\begin{enumerate}
\item $Q_{\mathrm{out}}(D)=U\,\mathcal L_{\mathrm{out}}(D)\,U$ and
$Q_{\mathrm{in}}(D)=U\,\mathcal L_{\mathrm{in}}(D)\,U$.
\item $\mathcal S_{\mathrm{out}}^{+}(D)=U\,\mathcal S_{\mathrm{out}}(D)\,U$ and $\mathcal S_{\mathrm{in}}^{+}(D)=U\,\mathcal S_{\mathrm{in}}(D)\,U$.
\item $Q_{\mathrm{inc}}(D)=U\,\mathcal L_{\mathrm{inc}}(D)\,U$.
\end{enumerate}
In particular, we have
\begin{enumerate}
\item 
$\spec(\mathcal L_{\mathrm{out}}(D))=\spec(Q_{\mathrm{out}}(D))$ and $
\spec(\mathcal L_{\mathrm{in}}(D))=\spec(Q_{\mathrm{in}}(D))$. 
\item 
$\spec(\mathcal S_{\mathrm{out}}(D))=\spec(\mathcal S_{\mathrm{out}}^{+}(D))$ and $
\spec(\mathcal S_{\mathrm{in}}(D))=\spec(\mathcal S_{\mathrm{in}}^{+}(D))$.  
\item 
$\spec(\mathcal L_{\mathrm{inc}}(D))=\spec(Q_{\mathrm{inc}}(D))$.
\end{enumerate}
\end{lemma}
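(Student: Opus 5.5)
The plan is to verify the three conjugation identities entrywise, using only that $U$ is a diagonal signature matrix with $U^2=I$ and that $A(D)$ has the off-diagonal block form forced by bipartiteness. With the vertices of $X$ listed first, every arc joins $X$ and $Y$, so
\[
A(D)=\begin{bmatrix} 0 & A_{12}\\ A_{21} & 0\end{bmatrix}.
\]
Conjugating by $U$ multiplies the $(i,j)$ entry of a matrix by $U_{ii}U_{jj}$. For $i,j$ in the same part this factor is $+1$, and for $i,j$ in different parts it is $-1$. Hence every diagonal matrix is fixed, $UD_{\mathrm{out}}U=D_{\mathrm{out}}$ and $UD_{\mathrm{in}}U=D_{\mathrm{in}}$. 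Because $A$ has zero diagonal blocks, $UAU=-A$, and likewise $UA^{\mathsf T}U=-A^{\mathsf T}$.

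Each identity then follows by linearity of $M\mapsto UMU$. First, $U\mathcal L_{\mathrm{out}}U=D_{\mathrm{out}}+A=Q_{\mathrm{out}}$, and the same computation with $D_{\mathrm{in}}$ gives $U\mathcal L_{\mathrm{in}}U=Q_{\mathrm{in}}$. Second, by \cref{lem:prim_lap}(2), $U\mathcal S_{\mathrm{out}}U=D_{\mathrm{out}}+\tfrac12(A+A^{\mathsf T})=\mathcal S^{+}_{\mathrm{out}}$, and similarly $U\mathcal S_{\mathrm{in}}U=\mathcal S^{+}_{\mathrm{in}}$. Third, by \cref{lem:forget}, $\mathcal L_{\mathrm{inc}}=D_{\mathrm{out}}+D_{\mathrm{in}}-(A+A^{\mathsf T})$, so $U\mathcal L_{\mathrm{inc}}U=Q_{\mathrm{inc}}$.

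The spectral statements follow from similarity. Since $U=U^{\mathsf T}=U^{-1}$, each relation $Q=UMU$ is a similarity $Q=U^{-1}MU$. Similar matrices have the same characteristic polynomial, hence the same spectrum as a multiset.

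There is no real obstacle here. The only point needing care is that bipartiteness is used exactly to guarantee that the diagonal blocks of $A$ vanish. The edge cases $n_1=0$ or $n_2=0$ are harmless: $D$ then has no arcs, $A=0$, and every identity holds trivially.
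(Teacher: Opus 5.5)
Your proposal is correct and follows essentially the same route as the paper: the off-diagonal block form of $A$ gives $UAU=-A$ (and $U(A+A^{\mathsf T})U=-(A+A^{\mathsf T})$), $U$ commutes with the diagonal degree matrices, each Laplacian is conjugated term by term, and equality of spectra follows from similarity since $U=U^{-1}$. Your remark on the degenerate cases $n_1=0$ or $n_2=0$ is a harmless extra that the paper does not address.
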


\begin{proof}
Suppose $D$ has adjacency matrix $A$ and let
$D_{\mathrm{out}}=D_{\mathrm{out}}(D)$,
$D_{\mathrm{in}}=D_{\mathrm{in}}(D)$.
With the chosen ordering, the adjacency matrix $A=A(D)$ has the block form
$A=\begin{bsmallmatrix}0&A_1\\A_2&0\end{bsmallmatrix}$,
so $UAU=-A$ and also $U(A+A^{\mathsf T})U=-(A+A^{\mathsf T})$. 
Since $U$ commutes with all diagonal matrices,
it commutes with $D_{\mathrm{out}}(D)$ and $D_{\mathrm{in}}(D)$.
Therefore,
\[
U(D_{\mathrm{out}}-A)U=D_{\mathrm{out}}+A,\qquad
U(D_{\mathrm{in}}-A)U=D_{\mathrm{in}}+A,
\]
\[
U\Bigl(D_{\mathrm{out}}-\tfrac12(A+A^{\mathsf T})\Bigr)U
= D_{\mathrm{out}}+\tfrac12(A+A^{\mathsf T}),
\]
and the formulas for $Q_{\mathrm{out}}$ and $\mathcal S_{\mathrm{out}}^{+}$ hold (and similarly for $Q_{\mathrm{in}}$ and $\mathcal S_{\mathrm{in}}^{+}$).
Finally,
\[
U\bigl(D_{\mathrm{out}}+D_{\mathrm{in}}-(A+A^{\mathsf T})\bigr)U
= D_{\mathrm{out}}+D_{\mathrm{in}}+(A+A^{\mathsf T})
\]
giving the formula for $Q_{\mathrm{inc}}$.
Similar matrices have the same spectrum.
\end{proof}


\begin{corollary}\label{cor:pf-mono-all}
Let $D$ be a strongly connected bipartite digraph and $M(D)$ be one of the following matrices:
\[
Q_{\mathrm{out}}(D),\ Q_{\mathrm{in}}(D),\ \mathcal S_{\mathrm{out}}^{+}(D),\ \mathcal S_{\mathrm{in}}^{+}(D),\ Q_{\mathrm{inc}}(D).
\]
Then $M$ is a nonnegative irreducible matrix and 
its spectral radius $\rho(M)$ is an eigenvalue of algebraic (and geometric)
multiplicity $1$.

Moreover, if $\widehat D$ is a subdigraph of $D$ (same vertex set), then for each of the
above signless variants $M$ we have $0\le M(\widehat D)\le M(D)$ entrywise, and hence
\[
\rho\bigl(M(\widehat D)\bigr)\ \le\ \rho\bigl(M(D)\bigr).
\]
\end{corollary}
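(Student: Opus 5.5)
Because \cref{lem:bipartite-all} shows that each of $Q_{\mathrm{out}},Q_{\mathrm{in}},\mathcal S^{+}_{\mathrm{out}},\mathcal S^{+}_{\mathrm{in}},Q_{\mathrm{inc}}$ is a diagonal matrix plus a nonnegative combination of $A$ and $A^{\mathsf T}$, the plan is to use Perron--Frobenius theory directly. I will not use the similarity with the Laplacians; instead I verify nonnegativity and irreducibility and then invoke the Perron--Frobenius theorem for irreducible nonnegative matrices.

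\textbf{Nonnegativity and irreducibility.} The diagonal parts $D_{\mathrm{out}}$, $D_{\mathrm{in}}$ and $D_{\mathrm{out}}+D_{\mathrm{in}}$ are nonnegative. The off-diagonal part is $A$, $\tfrac12(A+A^{\mathsf T})$, or $A+A^{\mathsf T}$, all of which are nonnegative, so each $M(D)$ is entrywise nonnegative.

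For irreducibility I use the standard fact that a nonnegative square matrix is irreducible exactly when its associated digraph, with an arc $i\to j$ whenever $M_{ij}>0$ for $i\neq j$, is strongly connected. In every case the off-diagonal support of $M(D)$ contains the support of $A(D)$: either it equals it ($Q_{\mathrm{out}},Q_{\mathrm{in}}$), or it equals the support of $A+A^{\mathsf T}$, which contains it. So the associated digraph contains $D$, which is strongly connected by hypothesis, and $M(D)$ is irreducible.

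\textbf{Perron--Frobenius.} For an irreducible nonnegative matrix, $\rho(M)$ is an eigenvalue of algebraic multiplicity one, and hence also of geometric multiplicity one. One degenerate case needs checking: the $1\times 1$ case ($n=1$, a single vertex with no arcs) gives $M=[0]$. A $1\times 1$ matrix is conventionally irreducible, and $\rho=0$ is then a simple eigenvalue, so the claim still holds. Bipartiteness is not needed for this part; it only links these matrices back to the Laplacian spectra through \cref{lem:bipartite-all}.

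\textbf{Monotonicity.} Let $\widehat D$ be a spanning subdigraph of $D$. Then $0\le A(\widehat D)\le A(D)$ entrywise, and each vertex has $\hat d_i^{\pm}\le d_i^{\pm}$. Every $M$ in the list is a sum of terms that are monotone nondecreasing in $A$ and in the degree matrices, so $0\le M(\widehat D)\le M(D)$. The standard comparison theorem for nonnegative matrices states that $0\le X\le Y$ implies $\rho(X)\le\rho(Y)$. One proof uses $\|X^k\|\le\|Y^k\|$ in any monotone norm together with Gelfand's formula. Applying it gives the inequality. This comparison needs no irreducibility, which matters because $\widehat D$ need not be strongly connected.

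I expect no step to be a real obstacle. The only point needing care is stating the irreducibility criterion correctly. For $\mathcal S^{+}$ and $Q_{\mathrm{inc}}$ the associated digraph is the symmetrization of $D$; this is strongly connected whenever $D$ is, and in fact whenever $D$ is merely weakly connected.
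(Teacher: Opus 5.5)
Your proposal is correct and follows essentially the same route as the paper. Both arguments check entrywise nonnegativity, get irreducibility from the strong connectivity of $D$ through the off-diagonal support of $A$ or $A+A^{\mathsf T}$, apply Perron--Frobenius for the simplicity of $\rho(M)$, and use entrywise monotonicity with $0\le X\le Y\Rightarrow\rho(X)\le\rho(Y)$ for the final inequality. Your extra remarks are accurate: bipartiteness is never used, and the comparison step via Gelfand's formula needs no irreducibility, so it applies even when $\widehat D$ is not strongly connected. One small caveat: calling the $1\times 1$ zero matrix ``conventionally irreducible'' depends on the convention, since some texts declare it reducible, but the simplicity conclusion holds either way.
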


\begin{proof}
All listed matrices are entrywise nonnegative. Since $D$ is strongly connected, $A$ is irreducible; hence $Q_{\mathrm{out}}(D)$ and $Q_{\mathrm{in}}(D)$ are irreducible. Also, the underlying undirected graph is connected, so $A+A^{\mathsf T}$ is irreducible; hence $\mathcal S_{\mathrm{out}}^+(D)$, $\mathcal S_{\mathrm{in}}^+(D)$, and $Q_{\mathrm{inc}}(D)$ are irreducible.

By the Perron--Frobenius theorem, if $M\ge 0$ is irreducible then $\rho(M)$ is an eigenvalue and is simple.
For the monotonicity statement, if $0\le M(\widehat D)\le M(D)$ entrywise, then
$\rho(M(\widehat D))\le \rho(M(D))$ is a standard consequence of Perron--Frobenius theorem.
\end{proof}


When a bipartite digraph is out-regular, we have the following theorem.

\begin{theorem}\label{thm:regular-bipartite-symmetry}
Let $D$ be a bipartite digraph of order $n$ that is $r$-out-regular. Then
\begin{enumerate}
\item $\phi_{\mathcal L_{\mathrm{out}}(D)}(x)=(-1)^n\,\phi_{\mathcal L_{\mathrm{out}}(D)}(2r-x)$.
\item $\phi_{\mathcal S_{\mathrm{out}}(D)}(x)=(-1)^n\,\phi_{\mathcal S_{\mathrm{out}}(D)}(2r-x)$.
\end{enumerate}
In particular, $\spec(\mathcal L_{\mathrm{out}}(D))$ and $\spec(\mathcal S_{\mathrm{out}}(D))$ are symmetric about $r$.
\end{theorem}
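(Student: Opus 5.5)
The plan is to combine the sign-change similarity of \cref{lem:bipartite-all} with the out-regularity hypothesis $D_{\mathrm{out}}(D)=rI$. Since $D$ is $r$-out-regular, we have $\mathcal L_{\mathrm{out}}(D)=rI-A$ and $Q_{\mathrm{out}}(D)=rI+A$, so that
\[
Q_{\mathrm{out}}(D)=2rI-\mathcal L_{\mathrm{out}}(D).
\]
By \cref{lem:bipartite-all}, $Q_{\mathrm{out}}(D)=U\mathcal L_{\mathrm{out}}(D)U$ with $U^{-1}=U$. Hence $\mathcal L_{\mathrm{out}}(D)$ is similar to $2rI-\mathcal L_{\mathrm{out}}(D)$.

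Taking characteristic polynomials of the two similar matrices gives
\begin{align*}
\phi_{\mathcal L_{\mathrm{out}}(D)}(x)&=\det\bigl(xI-(2rI-\mathcal L_{\mathrm{out}}(D))\bigr)\\
&=\det\bigl(-((2r-x)I-\mathcal L_{\mathrm{out}}(D))\bigr)\\
&=(-1)^n\phi_{\mathcal L_{\mathrm{out}}(D)}(2r-x).
\end{align*}
This proves the first identity.

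For $\mathcal S_{\mathrm{out}}$ the argument is the same. We have $\mathcal S_{\mathrm{out}}(D)=rI-\tfrac12(A+A^{\mathsf T})$ and $\mathcal S^+_{\mathrm{out}}(D)=rI+\tfrac12(A+A^{\mathsf T})=2rI-\mathcal S_{\mathrm{out}}(D)$. Part 2 of \cref{lem:bipartite-all} then gives the similarity $\mathcal S_{\mathrm{out}}(D)\sim 2rI-\mathcal S_{\mathrm{out}}(D)$, and the same determinant computation yields the second identity.

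The symmetry statement follows from these identities. The roots of $\phi(x)$ coincide, with multiplicities, with the roots of $\phi(2r-x)$, so $\lambda$ is an eigenvalue exactly when $2r-\lambda$ is, with the same multiplicity. The multisets are therefore invariant under reflection $\lambda\mapsto 2r-\lambda$, i.e.\ symmetric about $r$. Equivalently, this can be read directly from the similarity, since similar matrices have equal spectra.

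There is no real obstacle. The one point to check is that out-regularity is used only to write the degree matrix as the scalar $rI$. This is what makes $Q$ an affine function of $\mathcal L$; in-degrees play no role, even for $\mathcal S_{\mathrm{out}}$.
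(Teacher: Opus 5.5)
Your proposal is correct and follows essentially the same route as the paper: both use out-regularity to get $Q_{\mathrm{out}}(D)=2rI-\mathcal L_{\mathrm{out}}(D)$ (and likewise $\mathcal S^+_{\mathrm{out}}(D)=2rI-\mathcal S_{\mathrm{out}}(D)$), then the sign-change similarity by $U$ from \cref{lem:bipartite-all}, then determinants. The only difference is cosmetic: the paper conjugates $xI-\mathcal L_{\mathrm{out}}(D)$ by $U$ directly, while you compare the characteristic polynomials of the two similar matrices.
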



\begin{proof}
Define $U$ as in \cref{lem:bipartite-all} and let $A=A(D)$.
Since $D_{\mathrm{out}}(D)=rI$, we have $\mathcal L_{\mathrm{out}}(D)=rI-A$, and 
$Q_{\mathrm{out}}(D)=rI+A=2rI-\mathcal L_{\mathrm{out}}(D)$.
Thus, negating both sides, adding $xI$ and applying \cref{lem:bipartite-all} gives
\[
U\left(xI-\LLL(D)\right)U=(x-2r)I+\LLL(D).
\]
The first equation now follows by taking determinants and noting $\det(U^2)=1$.
The same argument applies to
\[
\mathcal S_{\mathrm{out}}(D)=rI-\tfrac12(A+A^{\mathsf T}),\qquad
\mathcal S_{\mathrm{out}}^{+}(D)=rI+\tfrac12(A+A^{\mathsf T})=2rI-\mathcal S_{\mathrm{out}}(D)
\]
giving the second equation.
\end{proof}

\section{Further Research}
In this paper, we study several Laplacian matrices associated with loopless digraphs, including the incidence, out-degree, in-degree, symmetrized, and skew-symmetrized Laplacians. Several interesting open problems remain. For instance, it would be interesting to obtain sharp upper bounds for the real and imaginary parts of the eigenvalues; related questions have been investigated in \cite{laplacian2}. Another natural direction is to characterize all digraphs with few distinct eigenvalues.

\printbibliography

\end{document}